\newcommand{\lel}{\left\langle}
\newcommand{\rir}{\right\rangle}
\newcommand{\diag}{\text{diag}}
\newcommand{\keywords}{{\it ~ Keywords:~}}
\newtheorem{thm}{Theorem}[section]
\newtheorem{definition}[thm]{Definition}
\newtheorem{lemma}[thm]{Lemma}
\newtheorem{prop}[thm]{Proposition}
\newtheorem{ass}[thm]{Assumption}
\title{Reflected Backward Stochastic Differential Equations for a Finite State Markov Chain Model and Applications to American Options}
\author{Dimbinirina Ramarimbahoaka \thanks{Department of Mathematics and Statistics, University of Calgary, 2500 University Drive NW, Calgary, AB, T2N 1N4, Canada. dimbikeli@gmail.com.} \and Zhe Yang \thanks{Department of Mathematics and Statistics, University of Calgary, 2500 University Drive NW, Calgary, AB, T2N 1N4, Canada. yangzhezhe@gmail.com.} \and Robert J. Elliott \thanks{Haskayne School of Business, University of Calgary, 2500 University Drive NW, Calgary, AB, T2N 1N4, Canada. relliott@ucalgary.ca}  \thanks{School of Mathematical Sciences, University of Adelaide, SA 5005, Australia.}}
\date{}
\begin{document}
\maketitle
\begin{abstract}
In this paper, we introduce a new kind of reflected backward stochastic differential equations (RBSDEs) driven by a martingale, in a Markov chain model, but not driven by Brownian motion, and give existence and uniqueness results for the new equations. Then
we discuss American options in a finite state Markov chain model, in the presence of a stochastic discount function (SDF) and using the theory of the new RBSDEs. We show that there exists a constrained super-hedging strategy for an American option, which is unique in our framework as the solution to an RBSDE.
\end{abstract}

\keywords{RBSDEs; Markov Chains; American options.}

\section{Introduction}	
\indent In 1997, El Karoui, Kapoudjian, Pardoux, Peng and Quenez \cite{KKPPQ} introduced reflected backward stochastic differential equations (RBSDEs) as follows:
\begin{enumerate}[label=\roman{*}), ref=(\roman{*})]
\item  $Y_t = \xi + \int_t^T f(s, Y_s, Z_s ) ds + K_T -K_t -\int_t^T  (Z_s, dB_s)$,  $~~~0 \leq t \leq T$;
\item  $Y_t \geq S_t $,  $~~~0 \leq t \leq T$;
\item  $\{K_t, t \in [0,T]\}$ is continuous and increasing, moreover, $K_0=0$ and \\
$\int_0^T (Y_s - S_s) dK_s = 0$.
\end{enumerate}
Here $B$ is Brownian motion. The solutions $\{(Y_t,Z_t,K_t),~t\in[0,T]\}$ are $\mathcal{F}_t$ progressively measurable processes and $Y$ is forced to stay above a process $S$ called an obstacle. To do so, a continuous increasing process $K$ is introduced in the dynamics.\\
\indent El Karoui, Pardoux and Quenez \cite{KPQ1} gave an application of RBSDEs, driven by a Brownian motion, to the optimal stopping time problem and American options. It has been shown that the price of an American option, as well as a superhedging strategy for the option, are solutions to RBSDEs. \\
\indent Hamad{\`{e}}ne and Ouknine \cite{ham1} extended continuous RBSDEs to RBSDEs with jumps. They investigated an RBSDE driven by a Brownian motion and an independent Poisson process. Moreover, instead of being continuous, the obstacle is just right continuous with left limits. They provided another solution of the problem, in Hamad{\`{e}}ne and Ouknine \cite{ham2}, using Snell envelope theory. A similar result has also been carried out by Essaky \cite{essaky}. Other significant results on BSDEs and RBSDEs with jumps are the works of Crepey and Matoussi \cite{crepy1} and Bouchard and Elie \cite{bouch}. Crepey and Matoussi\cite{crepy1} deal with more general dynamics.\\
\indent None of the above works have used a Markov chain to model the jumps. Moreover, diffusions can be approximated by Markov chains. See the work of Kushner \cite{kushner84}. Consequently, there is some motivation for discussing Markov chain models. van der Hoek and Elliott \cite{RE1} introduced a market model where uncertainties are modeled by a finite state Markov chain, rather than by Brownian motion or related jump diffusions. In this paper uncertainty is modeled using a Markov chain. Another tool used in van der Hoek and Elliott \cite{RE1} is the presence of a stochastic discount function (SDF) which implies no-arbitrage pricing. Kluge and Rogers \cite{rogers1}, Rogers \cite{rogers2}, Rogers and Zane \cite{rogers3} use the term \enquote{potential} for stochastic discount functions modelled by Markov processes. Rogers and Yousaf \cite{rogers4} combined Markov chain models and the potential approach to model interest rates and exchange rates. It is stated in Rogers and co-authors's work that taking the Markov process to be a finite state Markov chain gives better results. Moreover, the computation of the pricing formula in the potential approach is reduced to a finite weighted sum. In \cite{RE1}, stock prices are determined by the model, given the dividend paid by the stock, which in turn depends, at each time on the state of the Markov chain. SDFs are used to give the current price of future cashflows. Current prices of financial products such as bonds, foreign currencies, futures and European options were also derived in van der Hoek and Elliott \cite{RE1}. Later, van der Hoek and Elliott \cite{RE2} proved that the price of an American option in the Markov chain model with an SDF is a solution of a variational inequality driven by a system of ordinary differential equations.\\
\indent In the present work, we shall discuss American options in van der Hoek and Elliott's framework using an RBSDE approach. BSDEs in this framework were introduced by Cohen and Elliott \cite{Sam1} as
\[Y_t = \xi + \int_t^T f(u, Y_u,Z_u) du - \int_t^T Z'_{u-} dM_u,~~~ t \in [0,T],\]
where, $f$ is the driver, $\xi$ is the terminal condition and $M$ is a vector martingale given by the dynamics of the Markov chain. \\
\indent An, Cohen and Ji \cite{An} discuss American options using the theory of RBSDEs, for the Markov chain, in discrete time. This approach, as well as the above results on RBSDEs for Brownian motion, have not been investigated in a finite state Markov chain framework with a SDF in continuous time. Also, in the American option problem, as the holder of the option has the freedom to exercise at any time prior to maturity, most studies focus on determining the optimal exercise time for the holder and its associated optimal price. Instead of determining the option price, we consider the other party's side of the contract and show the existence of a superhedging strategy which covers the option's payoff at any time prior to maturity, in case the holder exercises the option. \\
\indent The sections of the paper are as follows: In Section 2, we present the Markov chain model and some preliminary results.  Section 3 establishes the existence and uniqueness of solutions for RBSDEs under the Markov chain model, and in section 4, we discuss an application to American options, where we show that a superhedging strategy exists as the solution to an RBSDE with the Markov chain noise.
\section{The  Model and Some Preliminary Results.}\label{prelim}
\subsection{The Markov Chain}
\indent Consider a continuous time financial market where randomness is modeled by a finite state Markov chain. Following van der Hoek and Elliott \cite{RE1, RE2}, we assume the finite state Markov chain $X=\{X_t: t\geq 0 \}$ is defined on the probability space $(\Omega,\mathcal{F},P)$ and the state space of $X$ is identified with the set $\{e_1,e_2\cdots,e_N\}$ in $\mathbb{R}^N$, where $e_i=(0,\cdots,1\cdots,0) ' $ with 1 in the $i$-th position. Then the  Markov chain has the semimartingale representation:
\begin{equation}\label{semimartingale}
X_t=X_0+\int_{0}^{t}A_uX_udu+M_t.
\end{equation}
Here, $A=\{A_t, t\geq 0 \}$ is the rate matrix of the chain $X$ and $M$ is a vector martingale (see Elliott, Aggoun and Moore \cite{RE4}). We assume the elements $A_{ij}(t)$  of $A$ are bounded. Then the martingale $M$ is square integrable.
Take $\mathcal{F}_t=\sigma\{X_u | 0\leq u \leq t\}$ to be the $\sigma$-algebra  generated by the Markov process $X=\{X_t\}$ and $\{\mathcal{F}_t\}$ to be the filtration on $(\Omega,\mathcal{F},P)$. Since $X$ is right continuous and has left limits (written RCLL), the filtration $\{\mathcal{F}_t\}$ is also right-continuous. \\
\indent We refer the reader to Buchanan and Hildebrandt \cite{Egg} for the proof of the following lemma.
\begin{lemma} \label{polya}
If a sequence $f_n(x)$ of monotonic functions converges to a continuous function $f(x)$ in $[a,b]$, then this convergence is uniform.
\end{lemma}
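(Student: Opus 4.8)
The statement is a form of P\'olya's theorem, and the plan is to upgrade pointwise convergence to uniform convergence by exploiting the rigidity of monotone functions together with the continuity of the limit. The first observation is that the pointwise limit of monotone functions of a fixed type is again monotone of that type, so $f$ is monotone; since it is also continuous on the compact interval $[a,b]$, it is uniformly continuous and bounded. I would treat the nondecreasing case explicitly (the nonincreasing case is symmetric, and the possibility that the $f_n$ are of mixed type is handled separately below), so that each $f_n$ and the limit $f$ may be taken nondecreasing.

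The core idea is to reduce uniform control of $f_n-f$ on the whole interval to control at finitely many points. Fix $\varepsilon>0$. Using the uniform continuity of $f$, choose a finite partition $a=x_0<x_1<\cdots<x_m=b$ such that $f(x_k)-f(x_{k-1})<\varepsilon$ for every $k$. Because there are only finitely many partition points, pointwise convergence lets me pick an index $N$ with $|f_n(x_k)-f(x_k)|<\varepsilon$ for all $k=0,\dots,m$ and all $n\geq N$ simultaneously. Now for an arbitrary $x\in[a,b]$, say $x\in[x_{k-1},x_k]$, monotonicity of $f_n$ gives the sandwich $f_n(x_{k-1})\leq f_n(x)\leq f_n(x_k)$, while monotonicity of $f$ gives $f(x_{k-1})\leq f(x)\leq f(x_k)$ with $f(x_k)-f(x_{k-1})<\varepsilon$. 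Combining the sandwich with the endpoint estimates $|f_n(x_k)-f(x_k)|<\varepsilon$ yields $|f_n(x)-f(x)|<2\varepsilon$ for every $x$ and every $n\geq N$, which is exactly uniform convergence.

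The step doing the real work is the passage from infinitely many points to the finite partition: this is where compactness of $[a,b]$ and, crucially, continuity of the limit enter, since a jump in $f$ would make it impossible to bound the oscillation on a subinterval by $\varepsilon$, and the sandwich estimate would fail near the jump. No deep difficulty remains beyond careful bookkeeping of the $\varepsilon$'s. Finally, to justify the reduction to a single monotonicity type, note that if the $f_n$ are not eventually of one type, then extracting a nondecreasing subsequence and a nonincreasing subsequence forces their common pointwise limit $f$ to be simultaneously nondecreasing and nonincreasing, hence constant; in that degenerate case each monotone $f_n$ is trapped between its endpoint values $f_n(a)$ and $f_n(b)$, both of which converge to the constant, so uniform convergence is immediate. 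Thus the argument covers all cases.
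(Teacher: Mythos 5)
Your proof is correct and complete: reducing to the nondecreasing case, partitioning $[a,b]$ via the uniform continuity of the continuous monotone limit, sandwiching $f_n(x)$ between its values at neighbouring partition points, and disposing of the mixed-monotonicity case by noting the limit must then be constant is exactly the classical argument. The paper itself offers no proof, deferring instead to Buchanan and Hildebrandt (1908), and your argument is precisely the one found in that reference (the standard proof of P\'olya's theorem), so there is nothing to criticize beyond noting that the strict inequality $|f_n(x)-f(x)|<2\varepsilon$ could harmlessly be a $\leq$.
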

\indent The following is given in Elliott \cite{elliott} as Lemma 2.21:
\begin{lemma}\label{indistinguish}
Suppose $V$ and $Y$ are real valued processes defined on the same probability space $(\Omega, \mathcal{F},P)$ such that for every $t \geq 0$, $V_t = Y_t$, a.s. If both processes are right continuous, then $V$ and $Y$ are indistinguishable, that is:
$$P(V_t = Y_t,~ \text{for any}~ t\geq 0)=1. $$
\end{lemma}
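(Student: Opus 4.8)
The plan is to exploit the standard device of reducing the uncountable collection of times $t \geq 0$ to a countable dense subset, on which the almost-sure coincidence can be intersected without destroying probability one, and then to transfer the conclusion to every time via right-continuity.

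First I would restrict attention to the nonnegative rationals $\mathbb{Q}_{+}$. For each fixed $q \in \mathbb{Q}_{+}$ the hypothesis gives an event $\Omega_q = \{V_q = Y_q\}$ with $P(\Omega_q) = 1$. Setting $\Omega_0 = \bigcap_{q \in \mathbb{Q}_{+}} \Omega_q$, countable subadditivity applied to the complements gives $P(\Omega_0^c) \leq \sum_{q} P(\Omega_q^c) = 0$, so $P(\Omega_0) = 1$, and on $\Omega_0$ we have $V_q(\omega) = Y_q(\omega)$ simultaneously for every rational $q \geq 0$.

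Next I would fix $\omega \in \Omega_0$ and an arbitrary $t \geq 0$, and choose a sequence of rationals $q_n \downarrow t$ with $q_n \geq t$. Right-continuity of $V$ and $Y$ gives $V_{q_n}(\omega) \to V_t(\omega)$ and $Y_{q_n}(\omega) \to Y_t(\omega)$; since $V_{q_n}(\omega) = Y_{q_n}(\omega)$ for each $n$, passing to the limit forces $V_t(\omega) = Y_t(\omega)$. As $t$ was arbitrary, every $\omega \in \Omega_0$ satisfies $V_t(\omega) = Y_t(\omega)$ for all $t \geq 0$, whence $P(V_t = Y_t \text{ for all } t \geq 0) \geq P(\Omega_0) = 1$.

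No single step is technically delicate, and the only point requiring care is the order of quantifiers. The difficulty that the lemma is designed to overcome is precisely that the hypothesis \emph{$V_t = Y_t$ a.s. for each fixed $t$} places the exceptional null set after the choice of $t$, so a priori that set could depend on $t$, and the union of such sets over uncountably many $t$ need not be null. The essential idea I would emphasize is that right-continuity allows a single null set, chosen only along the rationals, to control the behavior at every real time; this is exactly what upgrades a modification to indistinguishability.
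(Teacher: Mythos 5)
Your proof is correct and is the standard argument; the paper itself gives no proof, simply citing Lemma 2.21 of Elliott's \emph{Stochastic Calculus and Applications}, where exactly this rationals-plus-right-continuity reduction is used. The only cosmetic point is that your final set $\{V_t=Y_t \text{ for all } t\ge 0\}$ in fact \emph{equals} $\Omega_0$ (inclusion in both directions), which settles its measurability without appeal to outer measure.
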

Denote by $P'$ the transpose of any $\mathbb{R}^{n\times p}$ matrix $P$ for any $p,n \in \mathbb{N}$, $\text{diag} (x)$ for any $x\in \mathbb{R}^n$, the matrix whose diagonal components are the entries of the vector $x$ and the remaining components are zero and similarly $\text{diag} (M)$ for any $M \in \mathbb{R}^{n\times n}$ the square matrix whose diagonal components are those of $M$ and the remaining components are zero.\\
\indent For our Markov chain $X_t \in \{e_1,\cdots,e_N\}$, note that $X_t X'_t = \diag(X_t)$. Also, from \eqref{semimartingale}
$dX_t = A_t X_t dt + dM_t$. Then,
\begin{align}\label{1}
\nonumber X_tX'_t &= X_0X'_0 + \int_0^t X_{u-} dX'_u + \int_0^t (dX_{u}) X'_{u-} + \sum_{0 < u \leq t} \Delta X_u \Delta X'_u \\
 \nonumber &= \diag(X_0) + \int_0^t X_u (A_uX_u)' du + \int_0^t X_{u-} dM'_u \\
\nonumber& + \int_0^t A_u X_u X'_{u-} du + \int_0^t (dM_u) X'_{u-} + [X,X]_t\\
\nonumber
& = \diag (X_0) + \int_0^t X_u X'_u A'_u du + \int_0^t X_{u-} dM'_u \\
& + \int_0^t A_u X_u X'_{u-} du + \int_0^t (dM_u) X'_{u-} + [X,X]_t - \lel X,X\rir_t + \lel X,X \rir_t.
\end{align}
Here, $\lel X, X\rir$ is the unique predictable process such that $[X,X]-\lel X,X \rir$ is a martingale and write
\begin{equation}\label{L_t}
L_t = [X,X]_t - \lel X,X\rir_t, \quad t \in [0,T].
\end{equation}
 However, we also have:
\begin{equation}\label{2}
X_tX'_t = \diag (X_t) = \diag(X_0) + \int_0^t \diag (A_u X_u)  du + \int_0^t \diag(M_u).
\end{equation}
Equating the predictable terms in \eqref{1} and \eqref{2}, we have
\begin{equation}\label{3}
\lel X, X\rir_t =  \int_0^t \diag(A_uX_u) du - \int_0^t \diag({X_u}) A'_u du - \int_0^t A_u \diag(X_u) du.
\end{equation}
\indent Let $\Psi$ be the matrix
\begin{equation}\label{Psi}\Psi_t = \diag(A_tX_t)- \diag(X_t)A'_t - A_t \diag(X_t).
\end{equation}
Then $d \lel X,X \rir_t = \Psi_t dt$. For any $t>0$,  Cohen and Elliott \cite{Sam1, Sam3}, define the semi-norm $\|.\|_{X_t}$, for
$C, D \in \mathbb{R}^{N\times K}$ as :
\begin{align*}
\lel C, D\rir_{X_t} & = Tr(C' \Psi_t D), \\
\|C\|^2_{X_t} & = \lel C, C\rir_{X_t}.
\end{align*}
We only consider the case where $C \in \mathbb{R}^N$, hence we
introduce the semi-norm $\|.\|_{X_t}$ as:
\begin{align}\label{normC}
\nonumber
\lel C, D\rir_{X_t} & = C' \Psi_t D, \\[2mm]
\|C\|^2_{X_t} & = \lel C, C\rir_{X_t}.
\end{align}
It follows from equation \eqref{3} that
\[\int_t^T \|C\|^2_{X_s} ds = \int_t^T  C' d\lel X, X\rir_s C.\]
For $n \in \mathbb{N}$, denote by $|\cdot|_n$ the Euclidian norm in $\mathbb{R}^n$ and by $\|\cdot\|_{n\times n}$ the norm in $\mathbb{R}^{n \times n}$ such that $\|\Psi\|_{n\times n}= \sqrt{Tr(\Psi' \Psi)}$ for any $\Psi \in \mathbb{R}^{n \times n}$.\\
\indent The following lemma is Lemma 3.5 in \cite{zhedim}.
\begin{lemma}\label{normbound}
For any $C \in \mathbb{R}^N$,
$$ ~~~~\|C\|_{X_t} \leq \sqrt{3m} |C|_N, ~~\text{ for any }t\in[0,T],$$
where $m>0$ is the bound of $\|A_t\|_{N\times N}$, for any $t\in[0,T]$.
\end{lemma}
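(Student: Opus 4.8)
The plan is to expand $\|C\|_{X_t}^2 = \lel C, C\rir_{X_t} = C'\Psi_t C$ using the definition of $\Psi_t$ in \eqref{Psi}, and then to bound each of its three terms separately by $m\,|C|_N^2$. The key structural fact I would exploit is that, since $X_t$ takes values in $\{e_1,\dots,e_N\}$, I may write $X_t = e_j$ for the (random) index $j$ of the current state; then $\diag(X_t)=e_je_j'$ is a rank-one projection and $A_tX_t$ is simply the $j$-th column of $A_t$. Abbreviating $x:=X_t$ and $A:=A_t$, I would record
\[
C'\Psi_t C = C'\diag(Ax)C \;-\; C'\diag(x)A'C \;-\; C'A\diag(x)C.
\]

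For the first term I would write $C'\diag(Ax)C = \sum_i C_i^2 (Ax)_i$ and bound its absolute value by $\max_i|(Ax)_i|\cdot|C|_N^2 \le |Ax|_N\,|C|_N^2$. Since $|x|_N=1$ and the Frobenius norm dominates the operator norm, $|Ax|_N \le \|A\|_{N\times N}\,|x|_N \le m$, giving the bound $m\,|C|_N^2$. For the remaining two terms I would observe that they are transposes of one another (as scalars, using that $\diag(x)$ is symmetric), hence equal, and that each equals $C_j\sum_i A_{ij}C_i$. A single application of the Cauchy--Schwarz inequality, together with $|C_j|\le|C|_N$ and $\big|\mathrm{col}_j(A)\big|_N \le \|A\|_{N\times N}\le m$, bounds each by $m\,|C|_N^2$. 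Summing the three estimates yields $|C'\Psi_t C|\le 3m\,|C|_N^2$, and taking square roots gives $\|C\|_{X_t}\le\sqrt{3m}\,|C|_N$, as claimed.

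The individual estimates are routine; the only point requiring care is the passage from the Frobenius bound $\|A_t\|_{N\times N}\le m$ to the componentwise and operator-norm estimates used above, which is precisely where the constant $3$ enters, one factor of $m$ being contributed by each of the three terms. I would also note that $\|C\|_{X_t}^2 = C'\Psi_t C\ge 0$, consistent with $\Psi_t\,dt = d\lel X,X\rir_t$ being positive semidefinite, so that the square root is legitimate and the semi-norm in \eqref{normC} is well defined.
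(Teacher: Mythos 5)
Your argument is correct: expanding $C'\Psi_t C$ with $X_t=e_j$, so that $\diag(X_t)=e_je_j'$ and $A_tX_t$ is the $j$-th column of $A_t$, and bounding each of the three resulting terms by $m\,|C|_N^2$ via Cauchy--Schwarz and the Frobenius bound $\|A_t\|_{N\times N}\le m$ gives exactly $\|C\|_{X_t}^2\le 3m\,|C|_N^2$, and your observation that $C'\Psi_t C\ge 0$ (indeed it equals $\sum_{i\ne j}A_{ij}(C_i-C_j)^2\ge 0$) legitimizes the square root. Note that the present paper does not prove this lemma at all --- it imports it as Lemma 3.5 of \cite{zhedim} --- so there is no in-paper proof to compare against; your direct computation is the natural argument and is sound.
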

The proof of the following lemma is found in \cite{Sam3}:
\begin{lemma}\label{Z2}
For $Z$, a predictable process in $\mathbb{R}^N$, verifying:
 \[E \left[ \int_0^t \|Z_u\|^2_{X_u} du\right] < \infty,\]
we have:
\begin{equation*}
 E \left[\left(\int_0^t  Z'_{u} dM_u  \right)^2\right] = E \left[ \int_0^t \|Z_u\|^2_{X_u} du\right].
\end{equation*}
\end{lemma}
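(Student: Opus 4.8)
The plan is to recognize this as an It\^o-type isometry for the stochastic integral of the predictable process $Z$ against the square-integrable vector martingale $M$, and to reduce everything to the predictable quadratic variation $\lel M, M\rir$, which the model has already pinned down. First I would identify $\lel M, M\rir$ with $\lel X, X\rir$. Reading the decomposition \eqref{semimartingale} as $X_t = X_0 + \int_0^t A_u X_u\, du + M_t$, the continuous finite-variation drift $\int_0^t A_u X_u\, du$ contributes nothing to the quadratic covariation, so $[X,X] = [M,M]$; compensating both sides gives $\lel X, X\rir = \lel M, M\rir$. Combined with \eqref{3} and the definition \eqref{Psi}, this yields $d\lel M, M\rir_u = \Psi_u\, du$.

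Next I would compute the predictable quadratic variation of the scalar martingale $N_t := \int_0^t Z'_u\, dM_u$. Expanding componentwise, $N_t = \sum_i \int_0^t Z^i_u\, dM^i_u$, so by bilinearity of the bracket, $\lel N, N\rir_t = \sum_{i,j}\int_0^t Z^i_u Z^j_u\, d\lel M^i, M^j\rir_u = \int_0^t Z'_u\, d\lel M, M\rir_u\, Z_u = \int_0^t Z'_u \Psi_u Z_u\, du$, which is exactly $\int_0^t \|Z_u\|^2_{X_u}\, du$ by \eqref{normC}. Thus the target identity is precisely $E[N_t^2] = E[\lel N, N\rir_t]$.

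To make this rigorous I would proceed in the standard two stages. For a simple predictable integrand $Z_u = \sum_k \zeta_k \mathbbm{1}_{(t_k, t_{k+1}]}(u)$ with $\zeta_k$ bounded and $\mathcal{F}_{t_k}$-measurable, $N_t$ is an explicit sum of martingale increments; squaring, taking expectations, and using that the cross terms $E[\zeta_k'\Delta M_k\, \zeta_l'\Delta M_l]$ vanish for $k\neq l$ (condition on the later $\sigma$-algebra and apply the martingale property) leaves exactly $E[\sum_k \zeta_k'(\lel M,M\rir_{t_{k+1}} - \lel M,M\rir_{t_k})\zeta_k] = E[\int_0^t Z'_u\, d\lel M,M\rir_u\, Z_u]$, where the diagonal terms are evaluated via $E[\Delta M_k \Delta M_k'\mid \mathcal{F}_{t_k}] = E[\lel M,M\rir_{t_{k+1}} - \lel M,M\rir_{t_k}\mid\mathcal{F}_{t_k}]$. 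The hypothesis $E[\int_0^t \|Z_u\|^2_{X_u}\, du] < \infty$ guarantees that $Z$ lies in the relevant $L^2$ space of integrands and that $N$ is a genuine square-integrable martingale, so a density argument extends the identity from simple integrands to general $Z$: approximate $Z$ in the norm $E[\int_0^t \|\cdot\|^2_{X_u}\, du]^{1/2}$, and pass the isometry to the limit on both sides.

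The main obstacle I anticipate is this approximation step rather than the algebra. One must show that simple predictable processes are dense in the space of $\mathbb{R}^N$-valued predictable processes equipped with the seminorm $\|\cdot\|_{X_u}$, and that the stochastic integral extends continuously so that both sides survive the limit. Because $\|\cdot\|_{X_u}$ is only a \emph{seminorm} (the matrix $\Psi_u$ is singular, each column summing to zero), care is needed: one works modulo the null space of $\Psi_u$ and checks that $\int Z'\, dM$ depends on $Z$ only through its $\|\cdot\|_{X_u}$-equivalence class, so that the isometry is well posed on the quotient. Once the identity holds for general $Z$, substituting $d\lel M,M\rir_u = \Psi_u\, du$ and invoking \eqref{normC} gives the claim.
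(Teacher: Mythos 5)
Your argument is correct and is exactly the standard It\^o-isometry route: identify $\lel M,M\rir=\lel X,X\rir$ with density $\Psi_u\,du$, verify the isometry on simple predictable integrands, and extend by density while working modulo the null space of the seminorm. The paper itself gives no proof but defers to Cohen and Elliott \cite{Sam3}, where the result is established in essentially this same way, so there is nothing to object to here.
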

Denote by $\mathcal{P}$, the $\sigma$-field generated by the predictable processes defined on $(\Omega, P, \mathcal{F})$ and with respect to the filtration $\{\mathcal{F}_t\}_{t \in [0,\infty)}$. For $t\in[0,\infty)$, consider the following spaces: \\[2mm]
$ L^2(\mathcal{F}_t): =\{\xi;~\xi$ is a $ \mathbb{R} \text{-valued}~ \mathcal{F}_t $-measurable random variable such that $ E[|\xi|^2]< \infty\};$\\[2mm]
$L^2_{\mathcal{F}}(0,t;\mathbb{R}): =\{\phi:[0,t]\times\Omega\rightarrow\mathbb{R};~ \phi$ is an adapted and RCLL process with  $E[\int^t_0|\phi(s)|^2ds]<+\infty\}$;\\[2mm]
$P^2_{\mathcal{F}}(0,t;\mathbb{R}^N): =\{\phi:[0,t]\times\Omega\rightarrow\mathbb{R}^N;~ \phi $ is a predictable process with  $E[\int^t_0\|\phi(s)\|_{X_s}^2ds]<+\infty\}.$
\subsection{BSDEs for the Markov Chain Model.}\label{bsdeMC}
\indent Consider a one-dimensional BSDE with the Markov chain noise
as follows:
\begin{equation}\label{BSDEMC}
Y_t = \xi + \int_t^T f(u, Y_u, Z_u ) du -\int_t^T  Z'_{u} dM_u
,~~~~~t\in[0,T].
\end{equation}
Here the terminal condition $\xi$ and the coefficient $f$ are known. \\
\indent Lemma \ref{existence} (Theorem 6.2 in Cohen and Elliott \cite{Sam1}) gives the existence and uniqueness result of solutions for BSDEs
driven by Markov chains.
\begin{lemma}\label{existence}
Assume $\xi\in L^2(\mathcal{F}_T)$ and the predictable
function $f: \Omega \times [0, T] \times \mathbb{R} \times
\mathbb{R}^N \rightarrow \mathbb{R}$ satisfies a Lipschitz
condition, in the sense that there exists some constants $l_1, l_2>0$  such
that for each $y_1,y_2 \in \mathbb{R}$ and $z_1,z_2 \in
\mathbb{R}^{N}$, $t\in[0,T]$,
\begin{equation}\label{Lipchl}
|f(t,y_1,z_1) - f(t, y_2, z_2)| \leq l_1 |y_1-y_2| + l_2 \|z_1
-z_2\|_{X_t}.
\end{equation}
We also assume $f$ satisfies
\begin{equation}\label{finite}
 E [ \int_0^T |f^2(t,0,0)| dt] <\infty.
\end{equation} Then  there exists a solution $(Y, Z)\in L^2_{\mathcal{F}}(0,T;\mathbb{R})\times P^2_{\mathcal{F}}(0,T;\mathbb{R}^N)$
to BSDE (\ref{BSDEMC}). Moreover, this solution is
unique up to indistinguishability for $Y$ and equality $d\langle
X,X\rangle_t$ $\times\mathbb{P}$-a.s. for $Z$.
\end{lemma}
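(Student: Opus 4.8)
The plan is to construct the solution by a Banach fixed-point argument on an exponentially weighted space, exactly as in the classical Brownian case but with the Markov-chain It\^o isometry of Lemma \ref{Z2} playing the role of the usual one. First I would equip the product space $L^2_{\mathcal{F}}(0,T;\mathbb{R})\times P^2_{\mathcal{F}}(0,T;\mathbb{R}^N)$ with the norm
$$\|(Y,Z)\|_\beta^2 = E\Bigl[\int_0^T e^{\beta s}\bigl(|Y_s|^2 + \|Z_s\|^2_{X_s}\bigr)\,ds\Bigr],$$
for a constant $\beta>0$ to be chosen later, and check that this yields a complete space equivalent to the canonical one (modulo the kernel of $\Psi_s$ in the $Z$-component). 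For a fixed input pair $(y,z)$ I would then define the map $\Phi$ by setting $Y_t = E\bigl[\xi + \int_t^T f(u,y_u,z_u)\,du \mid \mathcal{F}_t\bigr]$. The process $N_t := Y_t + \int_0^t f(u,y_u,z_u)\,du$ is a square-integrable martingale, so by the martingale representation theorem for the Markov chain (available in the Cohen--Elliott framework) there is a predictable $Z \in P^2_{\mathcal{F}}(0,T;\mathbb{R}^N)$ with $N_t = N_0 + \int_0^t Z'_u\,dM_u$. Setting $\Phi(y,z) = (Y,Z)$ produces a candidate pair satisfying the BSDE by construction, and the bound $|f(u,y_u,z_u)| \leq |f(u,0,0)| + l_1|y_u| + l_2\|z_u\|_{X_u}$ from \eqref{Lipchl}, combined with the integrability assumption \eqref{finite}, shows $\Phi$ maps the space into itself.

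The heart of the argument is to show $\Phi$ is a contraction when $\beta$ is large. Writing $(\delta Y,\delta Z)$ for the difference of two outputs and $\delta f$ for the corresponding difference of drivers, I would apply It\^o's formula to $e^{\beta s}|\delta Y_s|^2$, integrate from $t$ to $T$, and take expectations. Here the jump martingale $\int \delta Z'\,dM$ contributes a quadratic-variation term which Lemma \ref{Z2} (equivalently, $d\lel X,X\rir_s = \Psi_s\,ds$) converts into $E\bigl[\int_t^T e^{\beta s}\|\delta Z_s\|^2_{X_s}\,ds\bigr]$, while the stochastic integral itself vanishes in expectation. This gives
$$E\bigl[e^{\beta t}|\delta Y_t|^2\bigr] + \beta E\Bigl[\int_t^T e^{\beta s}|\delta Y_s|^2\,ds\Bigr] + E\Bigl[\int_t^T e^{\beta s}\|\delta Z_s\|^2_{X_s}\,ds\Bigr] = 2E\Bigl[\int_t^T e^{\beta s}\,\delta Y_s\,\delta f_s\,ds\Bigr].$$
Using the Lipschitz condition \eqref{Lipchl} to bound $|\delta f_s| \leq l_1|\delta y_s| + l_2\|\delta z_s\|_{X_s}$ and Young's inequality to split the cross term on the right, one absorbs the $|\delta Y|$ and $\|\delta Z\|_{X}$ contributions into the left-hand side and arrives at an estimate of the form
$$\|(\delta Y,\delta Z)\|_\beta^2 \leq C(\beta)\,\|(\delta y,\delta z)\|_\beta^2,$$
where the constant $C(\beta)$ can be made strictly less than $1$ by taking $\beta$ large enough. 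Banach's fixed-point theorem then supplies a unique fixed point, which is the sought solution of \eqref{BSDEMC}.

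I expect the main obstacle to be the careful It\^o computation in the presence of jumps: unlike the Brownian case, the quadratic variation of $\delta Y$ is governed by the predictable bracket $\lel X,X\rir$ through the semi-norm $\|\cdot\|_{X_s}$, so the bookkeeping of the jump contributions must be matched precisely against Lemma \ref{Z2} when passing to expectations. A second, more structural point is that $\|\cdot\|_{X_s}$ is only a semi-norm, so $Z$ is determined only up to the kernel of $\Psi_s$; this is exactly why uniqueness for $Z$ can be asserted only $d\lel X,X\rir_t\times\mathbb{P}$-a.s., and the fixed-point argument must accordingly be run on the corresponding equivalence classes. Finally, uniqueness of $Y$ is upgraded from equality at each fixed $t$ to indistinguishability by invoking its right-continuity through Lemma \ref{indistinguish}.
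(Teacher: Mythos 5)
The paper does not actually prove this lemma; it is quoted as Theorem 6.2 of Cohen and Elliott \cite{Sam1}, whose proof is precisely the contraction-mapping argument you sketch: martingale representation for the chain's filtration to produce $Z$, the isometry of Lemma \ref{Z2} to convert the jump quadratic variation into $\|\cdot\|_{X_s}$ terms, and a $\beta$-weighted norm to obtain the contraction for large $\beta$. Your proposal is therefore correct and follows essentially the same route as the cited source, including the two necessary caveats --- that $Z$ is only determined up to the kernel of $\Psi_s$ (hence uniqueness only $d\langle X,X\rangle_t\times\mathbb{P}$-a.s.) and that uniqueness of $Y$ is upgraded to indistinguishability via right-continuity as in Lemma \ref{indistinguish}.
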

The following lemma as an extension of the above lemma to stopping times can be found in Cohen and Elliott \cite{Sam3}.
\begin{lemma}\label{BSDEST} Let $\tau >0$ be a stopping time such that there exists a real value $T$ such that $P(\tau > T)=0$.
Under the assumptioms of Lemma \ref{existence} with changing $T$ into $\tau$, BSDE for the Markov chain with stopping time
\begin{equation}
Y_t = \xi + \int_{t\wedge\tau}^{\tau} f(s, Y_s, Z_s ) ds -\int_{t\wedge\tau}^{\tau}  Z'_{s} dM_s
,~~~~~t\geq 0.
\end{equation}
has a solution $(Y, Z)\in L^2_{\mathcal{F}}(0,\tau;\mathbb{R})\times P^2_{\mathcal{F}}(0,\tau;\mathbb{R}^N)$. Moreover, this solution is
unique up to indistinguishability for $Y$ and equality $d\langle
X,X\rangle_t$ $\times\mathbb{P}$-a.s. for $Z$.
\end{lemma}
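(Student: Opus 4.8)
The plan is to reduce the problem to the fixed-horizon case already settled in Lemma \ref{existence}, exploiting the fact that $P(\tau>T)=0$ so that everything happens on the deterministic interval $[0,T]$. The idea is simply to switch off the driver once the clock passes $\tau$. Concretely, I would introduce the frozen driver
\[\tilde f(\omega, s, y, z) := \mathbbm{1}_{\{s \leq \tau(\omega)\}}\, f(\omega, s, y, z),\]
and consider the ordinary fixed-horizon BSDE on $[0,T]$
\[\bar Y_t = \xi + \int_t^T \tilde f(s, \bar Y_s, \bar Z_s)\, ds - \int_t^T \bar Z'_s\, dM_s, \qquad t \in [0,T].\]
First I would check that $\tilde f$ meets the hypotheses of Lemma \ref{existence}: the process $\mathbbm{1}_{[0,\tau]}(s)=\mathbbm{1}_{\{s\le\tau\}}$ is adapted and left-continuous, hence predictable, so $\tilde f$ is a predictable function; the Lipschitz estimate \eqref{Lipchl} is preserved because multiplying by an indicator can only shrink its left-hand side; and $E[\int_0^T|\tilde f^2(t,0,0)|\,dt] \le E[\int_0^\tau |f^2(t,0,0)|\,dt] < \infty$ by the running hypothesis. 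Since $\tau \le T$ a.s.\ gives $\mathcal F_\tau \subseteq \mathcal F_T$, the terminal datum $\xi$ lies in $L^2(\mathcal F_T)$, and Lemma \ref{existence} produces a unique pair $(\bar Y, \bar Z) \in L^2_{\mathcal F}(0,T;\mathbb R) \times P^2_{\mathcal F}(0,T;\mathbb R^N)$.

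The crux is then to show that this solution freezes at time $\tau$, namely $\bar Y_\tau = \xi$ and $\bar Z = 0$ on $(\tau, T]$ in the $d\langle X,X\rangle_t \times P$ sense. For $t \ge \tau$ the driver vanishes ($\mathbbm{1}_{\{s\le\tau\}} = 0$ for $s>\tau$, the single point $s=\tau$ being Lebesgue-null), so $\bar Y_\tau = \xi - \int_\tau^T \bar Z'_s\, dM_s$. Because $\bar Y_\tau$ and $\xi$ are both $\mathcal F_\tau$-measurable and $\{\int_0^t \bar Z'_s\, dM_s\}$ is a square-integrable martingale, conditioning on $\mathcal F_\tau$ and using optional sampling gives $\bar Y_\tau = \xi$, hence $\int_\tau^T \bar Z'_s\, dM_s = 0$ a.s. Writing this integral as $\int_0^T \mathbbm{1}_{\{s>\tau\}}\bar Z'_s\, dM_s$ (the integrand is predictable, being $(1-\mathbbm{1}_{[0,\tau]})\bar Z$) and applying the isometry of Lemma \ref{Z2} yields $E[\int_\tau^T \|\bar Z_s\|^2_{X_s}\, ds] = 0$, i.e.\ $\bar Z = 0$ after $\tau$. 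I would then \emph{define} the candidate solution of the stopped equation by $Y_t := \bar Y_{t\wedge\tau}$ and $Z_t := \mathbbm{1}_{\{t\le\tau\}}\bar Z_t$, and verify it solves the stopped BSDE: the identities $\int_t^T \mathbbm{1}_{\{s\le\tau\}} f(\cdots)\,ds = \int_{t\wedge\tau}^\tau f(\cdots)\,ds$ and $\int_t^T \mathbbm{1}_{\{s\le\tau\}} Z'_s\, dM_s = \int_{t\wedge\tau}^\tau Z'_s\, dM_s$ convert the frozen equation into the stopped one, while the membership $(Y,Z)\in L^2_{\mathcal F}(0,\tau;\mathbb R)\times P^2_{\mathcal F}(0,\tau;\mathbb R^N)$ follows by restriction from the corresponding bounds for $(\bar Y,\bar Z)$ on $[0,T]$.

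For uniqueness I would run the correspondence backwards: given any solution $(Y,Z)$ of the stopped BSDE, substituting $t\wedge\tau$ for $t$ shows $Y_t = Y_{t\wedge\tau}$, so $Y$ is already constant after $\tau$; extending by $\bar Y_t := Y_{t\wedge\tau}$ and $\bar Z_t := \mathbbm{1}_{\{t\le\tau\}}Z_t$ and reading the two stochastic-interval identities in the opposite direction shows $(\bar Y, \bar Z)$ solves the frozen fixed-horizon BSDE on $[0,T]$. The uniqueness clause of Lemma \ref{existence} then forces $\bar Y$ to be unique up to indistinguishability and $\bar Z$ unique $d\langle X,X\rangle_t\times P$-a.e., and these statements descend to $(Y,Z)$ on $[0,\tau]$. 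The step I expect to be most delicate is the bookkeeping around the random time: establishing that $\mathbbm{1}_{[0,\tau]}$ is predictable, so that Lemmas \ref{existence} and \ref{Z2} genuinely apply to the frozen integrand, and then rigorously (not merely formally) justifying the passage between the deterministic-limit integrals $\int_t^T \mathbbm{1}_{\{s\le\tau\}}(\cdots)$ and the stopping-time integrals $\int_{t\wedge\tau}^\tau(\cdots)$. The remainder is a routine transfer of the existence, Lipschitz and integrability bounds through the indicator.
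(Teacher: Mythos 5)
The paper does not prove this lemma at all --- it is quoted from Cohen and Elliott \cite{Sam3} --- so there is no internal proof to compare against. Your reduction (freeze the driver with the predictable indicator $\mathbbm{1}_{\{s\le\tau\}}$, solve the fixed-horizon BSDE via Lemma \ref{existence}, use optional sampling and the isometry of Lemma \ref{Z2} to show $\bar Y_\tau=\xi$ and $\bar Z=0$ after $\tau$, then stop and restrict) is the standard localization argument for bounded stopping times and is correct; the only points worth writing out in full are the ones you already flag, namely the predictability of $\mathbbm{1}_{[0,\tau]}$ and the passage from a.s.\ equality at each fixed $t$ to indistinguishability (Lemma \ref{indistinguish}) so that the equation may legitimately be evaluated at the random time $\tau$.
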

See Campbell and Meyer \cite{campbell} for the following definition:
\begin{definition}[Moore-Penrose pseudoinverse]\label{defMoore}
The Moore-Penrose pseudoinverse of a square matrix $Q$ is the matrix $Q^{\dagger}$ satisfying the properties:\\[2mm]
  1) $QQ^{\dagger}Q = Q$ \\[2mm]
  2) $Q^{\dagger}QQ^{\dagger} = Q^{\dagger}$ \\[2mm]
  3) $(QQ^{\dagger})' = QQ^{\dagger}$ \\[2mm]
  4) $(Q^{\dagger}Q)'=Q^{\dagger}Q.$
\end{definition}
\begin{ass}\label{ass0}
Assume the Lipschitz constant $l_2$ of the driver $f$ given in \eqref{Lipchl} satisfies  $$~~~~~~l_2\|\Psi_t^{\dagger}\|_{N \times N} \sqrt{6m}< 1, ~~~\text{ for any }~t \in [0,T],$$ where $\Psi$ is given in \eqref{Psi} and $m>0$ is the bound of $\|A_t\|_{N\times N}$, for any $t\in[0,T]$.
\end{ass}
\indent The following lemma, which is a comparison result for BSDEs driven by a Markov chain, is found in Yang, Ramarimbahoaka and Elliott \cite{zhedim}.\\
\begin{lemma} \label{CT} For $i=1,2,$ suppose $(Y^{(i)},Z^{(i)})$ is the solution of the
BSDE:
$$Y^{(i)}_t = \xi_i + \int_t^T f_i(s, Y^{(i)}_s, Z^{(i)}_s ) ds
- \int_t^T (Z_{s}^{(i)})' dM_s,\hskip.4cmt\in[0,T].$$
Assume $\xi_1,\xi_2\in L^2(\mathcal{F}_T)$, and $f_1,f_2:\Omega \times [0,T]\times \mathbb{R}\times \mathbb{R}^N \rightarrow \mathbb{R}$ satisfy some conditions such that the above two BSDEs have unique solutions. Moreover assume $f_1$ satisfies \eqref{Lipch} and Assumption \ref{ass0}.
If $\xi_1 \leq \xi_2 $, a.s. and $f_1(t,Y_t^{(2)}, Z_t^{(2)}) \leq f_2(t,Y_t^{(2)}, Z_t^{(2)})$, a.e., a.s., then
$$P( Y_t^{(1)}\leq Y_t^{(2)},~~\text{ for any } t \in [0,T])=1.$$
\end{lemma}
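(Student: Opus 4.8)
The plan is to reduce the comparison to determining the sign of the solution of a \emph{linear} BSDE obtained by differencing the two equations, and then to read off that sign from a Feynman--Kac type representation under an equivalent measure. First I would set $\hat Y_t = Y^{(1)}_t - Y^{(2)}_t$, $\hat Z_t = Z^{(1)}_t - Z^{(2)}_t$ and $\hat\xi = \xi_1 - \xi_2 \le 0$ (these are well defined since both BSDEs are assumed to have unique solutions in $L^2_{\mathcal{F}}(0,T;\mathbb{R})\times P^2_{\mathcal{F}}(0,T;\mathbb{R}^N)$), and subtract the two equations to obtain
\begin{equation*}
\hat Y_t = \hat\xi + \int_t^T \big[ f_1(s,Y^{(1)}_s,Z^{(1)}_s) - f_2(s,Y^{(2)}_s,Z^{(2)}_s)\big]\, ds - \int_t^T \hat Z_s'\, dM_s .
\end{equation*}
The goal is to show $\hat Y_t \le 0$.

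Next I would linearize the driver difference by the telescoping decomposition
\begin{equation*}
f_1(s,Y^{(1)}_s,Z^{(1)}_s) - f_2(s,Y^{(2)}_s,Z^{(2)}_s) = \underbrace{\big[f_1(s,Y^{(1)}_s,Z^{(1)}_s) - f_1(s,Y^{(2)}_s,Z^{(1)}_s)\big]}_{(\mathrm{I})} + \underbrace{\big[f_1(s,Y^{(2)}_s,Z^{(1)}_s) - f_1(s,Y^{(2)}_s,Z^{(2)}_s)\big]}_{(\mathrm{II})} + g_s ,
\end{equation*}
where $g_s = f_1(s,Y^{(2)}_s,Z^{(2)}_s) - f_2(s,Y^{(2)}_s,Z^{(2)}_s) \le 0$ by hypothesis. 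Term $(\mathrm{I})$ I would write as $a_s \hat Y_s$ with $a_s = (\mathrm{I})/\hat Y_s$ on $\{\hat Y_s \ne 0\}$ and $a_s = 0$ otherwise, so $|a_s| \le l_1$ by \eqref{Lipchl}. Term $(\mathrm{II})$ is the delicate one: using the Lipschitz bound $|(\mathrm{II})| \le l_2\|\hat Z_s\|_{X_s}$, I would define a predictable $\beta_s$, supported on $\{\|\hat Z_s\|_{X_s} \ne 0\}$ and built by projecting $\hat Z_s$ onto the range of $\Psi_s$ through the Moore--Penrose pseudoinverse $\Psi_s^{\dagger}$ of \eqref{Psi}, so that $(\mathrm{II}) = \lel \beta_s, \hat Z_s\rir_{X_s} = \beta_s'\Psi_s \hat Z_s$ while keeping $|\beta_s|_N$ under control; the projection is essential because $\Psi_s$ is singular, and the pseudoinverse is what lets me bound $|\beta_s|_N$ by a multiple of $l_2\|\Psi_s^{\dagger}\|_{N\times N}$ together with the constant $\sqrt{3m}$ of Lemma \ref{normbound}. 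This yields the linear BSDE
\begin{equation*}
\hat Y_t = \hat\xi + \int_t^T \big[ a_s \hat Y_s + \lel \beta_s, \hat Z_s\rir_{X_s} + g_s\big]\, ds - \int_t^T \hat Z_s'\, dM_s .
\end{equation*}

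I would then eliminate the $\hat Z$ term in the driver by a change of measure. Since $\Psi_s$ is symmetric, $\lel \beta_s, \hat Z_s\rir_{X_s} = \hat Z_s'\Psi_s\beta_s$, and taking the Dol\'eans--Dade exponential $\mathcal{E}(\int_0^\cdot \beta_s'\,dM_s)$ as the density of a measure $Q$, Girsanov's theorem for the jump martingale $M$ makes $\tilde M_t = M_t - \int_0^t \Psi_s\beta_s\, ds$ a $Q$-local martingale, after which the two occurrences of $\lel\beta_s,\hat Z_s\rir_{X_s}$ cancel. With the integrating factor $R_t = \exp(\int_0^t a_s\, ds) > 0$, the product rule applied to $R_t\hat Y_t$ together with a $Q$-conditional expectation gives
\begin{equation*}
R_t \hat Y_t = E^Q\Big[ R_T\hat\xi + \int_t^T R_s g_s\, ds \,\Big|\, \mathcal{F}_t\Big] \le 0 ,
\end{equation*}
since $\hat\xi \le 0$, $g_s \le 0$ and $R_s > 0$; hence $\hat Y_t \le 0$ for each fixed $t$, $P$-a.s. (as $Q$ and $P$ are equivalent). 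Finally, since $Y^{(1)}$ and $Y^{(2)}$ are RCLL, Lemma \ref{indistinguish} upgrades this to $P(Y^{(1)}_t \le Y^{(2)}_t \text{ for all } t \in [0,T]) = 1$.

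The main obstacle I anticipate is the change-of-measure step in the Markov-chain setting: for $Q$ to be a genuine probability measure the stochastic exponential $\mathcal{E}(\int \beta'\,dM)$ must be a strictly positive true martingale, which requires $1 + \beta_s'\Delta M_s > 0$ at every jump. Controlling the jumps $\beta_s'\Delta M_s$ forces a uniform Euclidean bound on $\beta_s$, and this is exactly where Assumption \ref{ass0} enters: the inequality $l_2\|\Psi_t^{\dagger}\|_{N \times N} \sqrt{6m}< 1$ is calibrated so that the bound on $|\beta_s|_N$ from the pseudoinverse construction, combined with the jump size $|\Delta M_s|_N \le \sqrt{2}$ of the chain, keeps $\beta_s'\Delta M_s$ strictly above $-1$, guaranteeing positivity of the density. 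The remaining care is to verify the requisite square-integrability, via Lemma \ref{Z2}, so that $\int R_s \hat Z_s'\,d\tilde M_s$ is a true $Q$-martingale and the conditional expectation above is justified.
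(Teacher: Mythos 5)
Your proposal is correct and follows essentially the argument behind this lemma (which the paper itself does not prove but imports from \cite{zhedim}): linearize the difference of drivers, encode the $z$-increment as $\lel\beta_s,\hat Z_s\rir_{X_s}$ with $\beta_s$ built from $\Psi_s^{\dagger}$, and remove it by a Girsanov change of measure whose density stays positive precisely because $|\beta_s|_N\leq l_2\|\Psi_s^{\dagger}\|_{N\times N}\sqrt{3m}$ and $|\Delta M_s|_N\leq\sqrt{2}$ give $|\beta_s'\Delta M_s|\leq l_2\|\Psi_s^{\dagger}\|_{N\times N}\sqrt{6m}<1$ under Assumption \ref{ass0}. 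You have correctly identified both the role of the pseudoinverse and the exact calibration of the constant $\sqrt{6m}$, so no gaps to report.
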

\section{RBSDEs driven by the Markov Chains}\label{section4}
\indent We now introduce an RBSDE for the Markov Chain:
\begin{enumerate}[label=\roman{*}), ref=(\roman{*})]
\item  $V_t = \xi + \int_t^T f(u, V_u, Z_u ) du + K_T -K_t -\int_t^T  Z'_u dM_u$,  $~~~0 \leq t \leq T$ ;
\item  $V_t \geq G_t $,  $0 \leq t \leq T$;
\item  $\{K_t, t \in [0,T]\}$ is continuous and increasing, moreover, $K_0=0$ and \\
$\int_0^T (V_u - G_u) dK_u = 0$.
\end{enumerate}
\indent We want to show the existence and uniqueness of the solution $(V,Z,K)$ of above equation under some conditions on $\xi,f$ and $G$ .
\begin{thm}
Suppose we have:
\begin{enumerate}
 \item $\xi\in L^2(\mathcal{F}_T)$,
\item  a $\mathcal{P} \times \mathcal{B}(\mathbb{R}^{1+N})$ measurable function $f: \Omega \times [0, T] \times \mathbb{R} \times \mathbb{R}^N \rightarrow \mathbb{R}$ which is Lipschitz continuous, with constants $c'$ and $c''$, in the sense that, for any $t \in [0,T]$, $v_1,v_2 \in \mathbb{R}$ and $z_1,z_2 \in \mathbb{R}^N$, $t\in[0,T]$,\emph{}
\begin{equation}\label{Lipch}
|f(t,v_1,z_1) - f(t, v_2, z_2)| \leq c'|v_1-v_2| + c''\|z_1 -z_2\|_{X_t}
\end{equation}
and $c''$ satisfies
 \begin{equation}\label{c''}c''\|\Psi_t^{\dagger}\|_{N \times N} \sqrt{6m}< 1, ~~~\text{ for any }~t \in [0,T],\end{equation}
 where $\Psi$ is given in \eqref{Psi} and $m>0$ is the bound of $\|A_t\|_{N\times N}$, for any $t\in[0,T]$.
\item \begin{equation}\label{Con_f}
 E \left[ \int_0^T |f^2(t,0,0)| dt\right] < \infty,
\end{equation}
\item a process $G$ called an \enquote{obstacle} which satisfies
\begin{equation}\label{Con_g}
 E \left[ \sup_{0\leq t \leq T} (G_t^+)^2 \right] < \infty.
\end{equation}
\end{enumerate}
Then there exists a solution $(V,Z,K)$, $V$ adapted and RCLL and $Z$ predictable, of the RBSDE i), ii), iii) above such that $V \in L^2_{\mathcal{F}}(0,T;\mathbb{R})$, $K_T\in L^2(\mathcal{F}_T)$ and $Z \in P^2_{\mathcal{F}}(0,T;\mathbb{R}^N)$, moreover, this solution is
unique up to indistinguishability for $Y$, $K$ and equality $d\langle
X,X\rangle_t$ $\times\mathbb{P}$-a.s. for $Z$.
\end{thm}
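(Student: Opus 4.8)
The plan is to adapt the penalization scheme of El Karoui et al. \cite{KKPPQ} to the Markov chain noise, relying on the existence, comparison and isometry results already available for the non-reflected equation (Lemmas \ref{existence}, \ref{CT} and \ref{Z2}). For each $n\in\mathbb{N}$ I would introduce the penalized driver $f_n(t,v,z)=f(t,v,z)+n(v-G_t)^-$ and solve the non-reflected BSDE \eqref{BSDEMC} with driver $f_n$ and terminal value $\xi$. Since the penalty affects only the $v$--dependence, the $z$--Lipschitz constant remains $c''$ and condition \eqref{c''} is preserved; the $v$--Lipschitz constant becomes $c'+n$, and $f_n(t,0,0)=f(t,0,0)+nG_t^+$ is square--integrable by \eqref{Con_f} and \eqref{Con_g}. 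Lemma \ref{existence} then yields a unique pair $(V^n,Z^n)$, and I set $K^n_t=n\int_0^t(V^n_u-G_u)^-\,du$, which is continuous, increasing and vanishes at $0$.

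Next I would derive a priori estimates uniform in $n$. Applying the It\^o formula to $e^{\beta t}|V^n_t|^2$ for a suitable $\beta$, taking expectations, and using the isometry of Lemma \ref{Z2}, the Lipschitz bound \eqref{Lipch} and Lemma \ref{normbound}, I expect a constant $C$ depending only on the data, with
$$\sup_n E\Big[\sup_{0\le t\le T}|V^n_t|^2+\int_0^T\|Z^n_u\|^2_{X_u}\,du+(K^n_T)^2\Big]\le C,$$
the bound on $K^n_T$ coming from isolating it in equation i) and using \eqref{Con_g}. Because $f_n\le f_{n+1}$ pointwise, the comparison result Lemma \ref{CT} (whose hypotheses hold thanks to \eqref{c''}) gives $V^n\le V^{n+1}$, so $V^n$ increases to a limit $V$, and the uniform bound upgrades this to convergence in $L^2_{\mathcal{F}}(0,T;\mathbb{R})$. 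Applying It\^o to $|V^n_t-V^m_t|^2$, and using Lemma \ref{Z2} with the a priori bounds to control the penalty cross--terms, shows $(Z^n)$ is Cauchy in $P^2_{\mathcal{F}}(0,T;\mathbb{R}^N)$ with limit $Z$, while $(K^n)$ converges to an increasing process $K$ with $K_0=0$.

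It remains to check conditions ii) and iii). The obstacle inequality $V\ge G$ follows since the uniform bound on $K^n_T=n\int_0^T(V^n_u-G_u)^-\,du$ forces $E\big[\int_0^T((V^n_u-G_u)^-)^2\,du\big]\to0$, whence $(V-G)^-=0$. For the Skorokhod relation, note that $dK^n=n(V^n-G)^-\,du$ is carried by $\{V^n\le G\}$, so $\int_0^T(V^n_u-G_u)\,dK^n_u\le0$; passing to the limit and combining with the reverse inequality produced by $V\ge G$ and the monotonicity of $K$ yields $\int_0^T(V_u-G_u)\,dK_u=0$. To justify this passage to the limit one needs uniform convergence of the increasing processes, which is exactly where Lemma \ref{polya} enters: once $K$ is known to be continuous, the monotone convergence $K^n\to K$ is automatically uniform.

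The main obstacle is precisely the continuity of the limiting process $K$. Because the driving martingale $M$ jumps at the transition times of the chain, the solution $V$ is only RCLL, and one must show that every jump of $V$ is absorbed by the stochastic integral, i.e.\ $\Delta V_t=Z'_t\Delta M_t$, so that no jump mass is left for $K$; this jump--matching, inherited from the absolute continuity of each $K^n$ and preserved in the limit, is the delicate point absent in the Brownian case, where continuity of $K$ is immediate. Finally, for uniqueness I would take two solutions, apply It\^o to the squared difference of the $V$--components, use that the cross term $\int_t^T(V_u-\bar V_u)\,d(K-\bar K)_u\le0$ by the two Skorokhod conditions and the obstacle inequalities, and close the estimate by a Gronwall argument based on \eqref{Lipch} and \eqref{c''}; this forces $V=\bar V$ up to indistinguishability, and then $Z=\bar Z$, $d\langle X,X\rangle_t\times P$--a.s., and $K=\bar K$.
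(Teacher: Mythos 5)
Your overall strategy --- penalization with $f_n(t,v,z)=f(t,v,z)+n(v-G_t)^-$, uniform a priori estimates, a monotone limit via the comparison theorem, Cauchy arguments for $Z^n$ and $K^n$, and a Gronwall/Skorokhod argument for uniqueness --- is exactly the paper's, and your uniqueness paragraph is essentially a correct summary of the paper's proof. The gap is in the existence part, at the single most technical point: the convergence of $(V^n-G)^-$ to zero. You assert that the uniform bound on $K^n_T=n\int_0^T(V^n_u-G_u)^-\,du$ forces $E\left[\int_0^T((V^n_u-G_u)^-)^2\,du\right]\to0$. It does not: $E[(K^n_T)^2]\le C$ only gives $E\left[\left(\int_0^T(V^n_u-G_u)^-\,du\right)^2\right]\le C/n^2$, i.e.\ control of the $L^1$-in-time norm of $(V^n-G)^-$, and Cauchy--Schwarz goes the wrong way to upgrade this to the $L^2$-in-time norm, let alone to the sup-in-time norm. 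The statement $E[\sup_{t}|(V^n_t-G_t)^-|^2]\to0$ is precisely what is needed twice: (a) to control the cross terms $E\left[\int_t^T(V^n_u-G_u)^-\,dK^p_u\right]\le\left(E[\sup_{t}|(V^n_t-G_t)^-|^2]\right)^{1/2}\left(E[(K^p_T)^2]\right)^{1/2}$ in the It\^o computation that makes $(Z^n)$ Cauchy --- the a priori bounds alone do not close this estimate because of the factor $p$ hidden in $dK^p$; and (b) to get $V_t\ge G_t$ for every $t$ rather than for a.e.\ $t$ (the obstacle $G$ is not assumed right-continuous, so an a.e.-in-$t$ inequality cannot be upgraded for free).

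The paper closes this gap with a separate and substantial argument (Lemma \ref{V_G}): it introduces the auxiliary linear penalized BSDE with driver $F_n(u,v,z)=f(u,V^n_u,Z^n_u)+n(G_u-v)$, which lies below $f_n$, represents its solution at an arbitrary stopping time $\tau$ explicitly as a conditional expectation involving $e^{-n(T-\tau)}\xi+n\int_\tau^Te^{-n(u-\tau)}G_u\,du$, lets $n\to\infty$ to obtain $V_\tau\ge G_\tau$ a.s.\ for every stopping time $\tau$, and then invokes the Section Theorem to conclude $P(V_t\ge G_t\ \text{for all }t)=1$; uniform convergence $\sup_t|V^n_t-V_t|\to0$ (obtained via the Buchanan--Hildebrandt Lemma \ref{polya}) and dominated convergence then yield $E[\sup_t|(V^n_t-G_t)^-|^2]\to0$. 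Some version of this stopping-time/Section-Theorem argument (or an equivalent Snell-envelope argument) is unavoidable; without it your Cauchy estimate for $(Z^n)$ and your verification of condition ii) do not go through. Separately, your route to the continuity of $K$ via jump-matching $\Delta V_t=Z'_t\Delta M_t$ is, as you yourself note, circular with the use of Lemma \ref{polya}; the paper avoids this by proving $E[\sup_t|K^n_t-K^p_t|^2]\to0$ directly from equation i) together with Doob's inequality and Lemma \ref{Z2}, so that $K$ is a uniform limit of continuous increasing processes and its continuity is immediate.
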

\subsection{Proof of Uniqueness}
\indent In this section, we first suppose that  solutions of the RBSDE exist, then we prove that they are unique, almost surely.\\[2mm]
\noindent {\bf Proof.}
Suppose $\xi \in L^2(\mathcal{F}_T)$, $f$ satisfies \eqref{Lipch}, \eqref{c''} and \eqref{Con_f} and $G$ satisfies \eqref{Con_g}.  Let $(V^{(1)}, Z^{(1)},K^{(1)})$ and $(V^{(2)}, Z^{(2)},K^{(2)})$ be two solutions of the RBSDE, that is, both $(V^{(1)}, Z^{(1)},K^{(1)})$ and $(V^{(2)}, Z^{(2)},K^{(2)})$ satisfy i) - iii), $V^{(1)}, V^{(2)} \in L^2_{\mathcal{F}}(0,T;\mathbb{R})$, $K^{(1)}_T,K^{(2)}_T\in L^2(\mathcal{F}_T)$ and $Z^{(1)}, Z^{(2)} \in P^2_{\mathcal{F}}(0,T;\mathbb{R}^N)$. Applying the product rule to $|V_t^{(1)}- V_t^{(2)}|$, we have
\begin{align}\label{v1v22}
\nonumber
&|V_t^{(1)} - V_t^{(2)}|^2 \\
\nonumber
& = -2 \int_t^T (V_{u-}^{(1)} - V_{u-}^{(2)}) d(V_{u}^{(1)} - V_{u}^{(2)})  - \sum_{t \leq u \leq T} \Delta (V_u^{(1)}-V_u^{(2)})\Delta (V_u^{(1)}-V_u^{(2)}) \\
\nonumber
&= -2 \int_t^T (V^{(1)}_{u}- V_{u}^{(2)} ) [f(u,V_u^{(2)}, Z_u^{(2)}) - f(u,V_u^{(1)}, Z_u^{(1)})] du \\
\nonumber
& \quad -2 \int_t^T (V^{(1)}_{u}- V_{u}^{(2)}) dK_u^{(2)}  + 2 \int_t^T (V^{(1)}_{u}- V_{u}^{(2)} ) dK_u^{(1)} \\
\nonumber
& \quad -2 \int_t^T  (V^{(1)}_{u-}- V_{u-}^{(2)})  (Z_{u}^{(1)}- Z_{u}^{(2)})' dM_u \\
& \quad - \sum_{t \leq u \leq T} \Delta (V_u^{(1)}-V_u^{(2)})\Delta (V_u^{(1)}-V_u^{(2)}).
\end{align}
We derive
\begin{align}\label{deltav1v2}
\nonumber
&\sum_{t\leq u \leq T}  \Delta (V_u^{(1)}-V_u^{(2)}  ) \Delta (V_u^{(1)}-V_u^{(2)} ) \\
\nonumber
& = \sum_{t\leq u \leq T}( ( Z_{u}^{(1)}-Z_{u}^{(2)})' \Delta X_u)(  ( Z_{u}^{(1)}-Z_{u}^{(2)})'\Delta X_u ) \\
\nonumber
& = \sum_{t\leq u \leq T} (Z_{u}^{(1)}-Z_{u}^{(2)} )' \Delta X_u \Delta X_u' (Z_{u}^{(1)}-Z_{u}^{(2)}  ) \\
\nonumber
& =  \int_t^T ( Z_{u}^{(1)}-Z_{u}^2 )'  (dL_u + d\lel X,X\rir_u) (Z_{u}^{(1)}-Z_{u}^{(2)} )\\
& =  \int_t^T (Z_{u}^{(1)}-Z_{u}^{(2)})' dL_u(Z_{u}^{(1)}-Z_{u}^{(2)} ) + \int_t^T \|Z_{u}^{(1)}-Z_{u}^{(2)} \|_{X_u}^2 du.
\end{align}
From ii) and iii), we know
\begin{align}\label{k1k2}
\nonumber&  - \int_t^T (V^{(1)}_{u}- V_{u}^{(2)}) dK_u^{(2)}  +  \int_t^T (V^{(1)}_{u}- V_{u}^{(2)} ) dK_u^{(1)}\\
\nonumber& = - \int_t^T (V_u^{(1)} - G_u) dK^{(2)}_u + \int_t^T (V_u^{(2)} - G_u) dK_u^{(2)}\\
\nonumber& + \int_t^T (V_u^{(1)} - G_u) dK^{(1)}_u - \int_t^T (V_u^{(2)} - G_u) dK_u^{(1)}\\
\nonumber
& = -\int_t^T (V_u^{(1)} - G_u) dK^{(2)}_u - \int_t^T (V_u^{(2)} - G_u) dK_u^{(1)}\\
&\leq 0.
\end{align}
Therefore, writing $c=\max\{c',c''\}$, by \eqref{v1v22}, \eqref{deltav1v2} and \eqref{k1k2} using the Lipschitz condition, we deduce for any $t\in[0,T],$
\begin{align}\label{star}
\nonumber
&  E \left[ |V_t^{(1)}- V_t^{(2)}|^2 \right]+ E \left[ \int_t^T \|Z_u^{(1)} - Z_u^{(2)}\|^2_{X_u} du \right]\\
\nonumber
& \leq 2 E \left[ \int_t^T |(V^{(1)}_{u}- V_{u}^{(2)} ) (f(u,V_u^{(2)}, Z_u^{(2)}) - f(u,V_u^{(1)}, Z_u^{(1)}))| du \right] \\
\nonumber
&\leq E \left[  2 \int_t^T c (|V_u^{(1)}-V_u^{(2)}|^2+ |V_u^{(1)}-V_u^{(2)}| \cdot\|Z_{u}^{(1)}- Z_{u}^{(2)}\|_{X_u}) du  \right] \\
& \leq E \left[ (2c+2c^2) \int_t^T |V_u^{(1)}-V_u^{(2)}|^2 du + \frac{1}{2}\int_t^T  \|Z_{u}^{(1)}- Z_{u}^{(2)}\|^2_{X_u} du  \right].
\end{align}
That is,
\begin{align*}
E \left[ |V_t^{(1)}- V_t^{(2)}|^2 \right] \leq (2c+2c^2)  E \left[ \int_t^T |V_u^{(1)}-V_u^{(2)}|^2 du \right].
\end{align*}
From Gronwall's lemma, we know $E \left[ |V_t^{(1)}- V_t^{(2)}|^2 \right]=0$ for any $t\in [0,T]$. So for each $t \in [0,T]$, $V_t^{(1)}- V_t^{(2)} =0$, a.s. Since $V^{(1)}$ and $V^{(2)}$ are RCLL, it follows from Lemma \ref{indistinguish} that
$$P( V_t^{(1)}= V_t^{(2)}, \text{ for any } t\in
[0,T])=1.$$
 Also,
$$E \left[ \int_0^T |V_u^{(1)}-V_u^{(2)}|^2 du \right]= \int_0^T E \left[|V_u^{(1)}-V_u^{(2)}|^2 \right]du=0.$$
By (\ref{star}), we obtain
$$E \left[ \int_0^T \|Z_u^{(1)} - Z_u^{(2)}\|^2_{X_u} du \right]=0.$$
Hence $Z_t^{(1)}=Z_t^{(2)}$, $d\lel X,X\rir_t \times \mathbb{P}$-a.s., and from Lemma \ref{Z2}, we derive for any $t\in[0,T]$, $\int_t^T   (Z_{u}^{(1)}-Z_{u}^{(2)})' dM_u=0$, a.s. Using i), we have for any $t\in[0,T],$
\begin{align*}
V_t^{(1)}-V_t^{(2)} & = \int_t^T (f(u, V_u^{(1)}, Z_u^{(1)})-f(u, V_u^{(2)}, Z_u^{(2)})) du \\
&  + (K_T^{(1)}-K_T^{(2)})-(K_t^{(1)}-K_t^{(2)}) - \int_t^T   (Z_{u}^{(1)}-Z_{u}^{(2)})' dM_u.
\end{align*}
Set $t=0$, noticing $K_0^{(1)}=K_0^{(2)}=0$, we deduce
$$\begin{array}{ll}
 |K_T^{(1)}-K_T^{(2)}| \\[2mm]
  \leq |V_0^{(1)}-V_0^{(2)}|+|\int_0^T  (f(u, V_u^{(1)}, Z_u^{(1)})-f(u, V_u^{(2)},
  Z_u^{(2)}))du | \\[2mm]
  + |\int_0^T   (Z_{u}^{(1)}-Z_{u}^{(2)})' dM_u|\\[2mm]
  \leq  \int_0^T c (|V_u^{(1)}-V_u^{(2)}| + \|Z_u^{(1)}-Z_u^{(2)}\|_{X_u}) du \\[2mm]
  =0, ~ \text{ a.s.}
\end{array}$$
Then, similarly, we conclude for any $t\in[0,T]$, $K_t^{(1)}=K_t^{(2)}$, a.s. Since $K$ is continuous, we derive $$P(K_t^{(1)}=K_t^{(2)}, \text{ for any } t \in [0,T])=1.$$
$\mbox{} \hfill \Box$
\subsection{Proof of Existence}
\indent Following \cite{KKPPQ}, in the case of RBSDE driven by a Brownian motion, we proceed with the proof of existence using approximation via penalization.\\
\noindent {\bf Proof of existence.} Set $c=\max\{c',c''\}$.
 For each $n$, consider the following BSDE driven by the Markov chain:
\begin{equation}\label{V_n}
V_t^n = \xi + \int_t^T f(u, V_u^n, Z_u^n) du + n \int_t^T (V_u^n -G_u)^- du - \int_t^T  (Z_{u}^n)' dM_u.
\end{equation}
For $(u,v,z)\in [0,T]\times \mathbb{R}\times \mathbb{R}^N$, define a map:
\[f_n(u, v, z) = f(u, v, z) + n  (v -G_u)^-.\]
For any $u \in [0,T]$ and $(v_1,z_1),(v_2,z_2) \in \mathbb{R}\times \mathbb{R}^N$, we have
\begin{align}\label{fnlip}
\nonumber
&|f_n(u, v_1, z_1)-f_n(u, v_2, z_2)|\\[2mm]
\nonumber
&\leq |f(u, v_1, z_1)-f(u, v_2, z_2)|+n| (v_1 -G_u)^- -(v_2 -G_u)^-|\\[2mm]
\nonumber
&\leq c'|v_1-v_2|+c''\|z_1-z_2\|_{X_u}
+ n| v_1 -v_2|\\[2mm]
&\leq (c'+n)|v_1-v_2|+c''\|z_1-z_2\|_{X_u}.
\end{align}
So $f_n$ is a Lipschitz continuous function in $v$ and $z$. Hence by Lemma \ref{existence}, there exists a unique pair $(V^n, Z^n)\in L^2_{\mathcal{F}}(0,T;\mathbb{R})\times P^2_{\mathcal{F}}(0,T;\mathbb{R}^N)$ which satisfies \eqref{V_n}.
We define:
\[K_t^n = n \int_0^t (V_u^n - G_u)^- du, \quad 0 \leq t \leq T. \]
\begin{lemma} \label{lemma_ito_V}
\begin{align*}
|V_t^n|^2 & = |\xi|^2 + 2 \int_t^T V_u^n f(u, V_u^n, Z_u^n) du +2\int_t^T V_u^n dK_u^n \\
& -2 \int_t^T V_{u-}^n (Z_{u}^n)'dM_u- \int_t^T (Z_{u}^n)' dL_{u} Z_{u}^n - \int_t^T \|Z_u^n\|^2_{X_u} du.
\end{align*}
\end{lemma}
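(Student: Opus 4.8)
The plan is to apply the Itô product rule to the square of the semimartingale $V^n$, in the same spirit as the computation of $|V_t^{(1)}-V_t^{(2)}|^2$ carried out in the uniqueness proof, equations \eqref{v1v22}--\eqref{deltav1v2}. First I would read off from \eqref{V_n} the forward differential form
\[
dV_t^n = -f(t,V_t^n,Z_t^n)\,dt - dK_t^n + (Z_t^n)'\,dM_t,
\]
where $dK_t^n = n(V_t^n-G_t)^-\,dt$ by the definition of $K^n$. The finite-variation part $-f\,dt - dK^n$ is continuous, so $V^n = V_0^n + C + N$ with $C$ continuous of finite variation and $N_t = \int_0^t (Z_u^n)'\,dM_u$ a square-integrable martingale; consequently $C$ contributes nothing to the quadratic variation and all jumps of $V^n$ come from the martingale term, $\Delta V_t^n = (Z_t^n)'\Delta M_t = (Z_t^n)'\Delta X_t$.

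Then I would apply the product rule in the form
\[
|V_t^n|^2 = |V_T^n|^2 - 2\int_t^T V_{u-}^n\,dV_u^n - \big([V^n,V^n]_T - [V^n,V^n]_t\big),
\]
using $V_T^n = \xi$. Expanding $-2\int_t^T V_{u-}^n\,dV_u^n$ and invoking the continuity of the integrators $du$ and $dK^n$ to replace $V_{u-}^n$ by $V_u^n$ in the two finite-variation terms yields precisely $2\int_t^T V_u^n f(u,V_u^n,Z_u^n)\,du$, $2\int_t^T V_u^n\,dK_u^n$ and the stochastic integral $-2\int_t^T V_{u-}^n (Z_u^n)'\,dM_u$.

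The key computational step is the quadratic variation. Since $\Delta V_u^n = (Z_u^n)'\Delta X_u$, summing the jump contributions exactly as in \eqref{deltav1v2} gives $[V^n,V^n]_t = \int_0^t (Z_u^n)'\,d[X,X]_u\,Z_u^n$. I would then split $[X,X] = L + \langle X,X\rangle$ through \eqref{L_t} and use the identity $\int_t^T (Z_u^n)'\,d\langle X,X\rangle_u\,Z_u^n = \int_t^T \|Z_u^n\|^2_{X_u}\,du$ recorded after \eqref{normC}, so that
\[
[V^n,V^n]_T - [V^n,V^n]_t = \int_t^T (Z_u^n)'\,dL_u\,Z_u^n + \int_t^T \|Z_u^n\|^2_{X_u}\,du.
\]
Substituting this into the product-rule expansion and rearranging gives the stated identity.

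The only real subtlety, and the step I expect to need the most care, is the bookkeeping of left limits: retaining $V_{u-}^n$ in the stochastic integral against $dM$ while legitimately passing to $V_u^n$ in the continuous finite-variation integrals, and confirming that no cross term between the continuous finite-variation part and the martingale part survives in $[V^n,V^n]$. These facts follow from $[C,C]=0$ and $[C,N]=0$ for a continuous finite-variation $C$; everything else is a direct transcription of the jump calculation already performed in \eqref{deltav1v2}.
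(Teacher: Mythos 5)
Your proposal is correct and follows exactly the route the paper intends: the paper's entire proof is the remark that ``similar calculations as in \eqref{deltav1v2} yield the result,'' i.e.\ the product rule applied to $|V^n_t|^2$ with the jump/quadratic-variation term computed via $\Delta V^n_u=(Z^n_u)'\Delta X_u$ and the decomposition $[X,X]=L+\langle X,X\rangle$ from \eqref{L_t}. Your additional care about replacing $V^n_{u-}$ by $V^n_u$ in the continuous finite-variation integrals and about the vanishing cross-variation is sound and merely fills in details the paper leaves implicit.
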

Similar calculations as in \eqref{deltav1v2} yield the result.
Here, we establish for $ (V^n, Z^n, K^n)$ a priori estimates which are independent of $n$.
\begin{lemma}\label{estimateAO}
There exists a constant $C_0>0$, such that for any $n \in \mathbb{N}$:
\begin{equation*}
\sup_{0\leq t \leq T} E [|V_t^n|^2] + E \left[ \int_0^T \|Z_t^n\|^2_{X_t}dt \right]+E [|K_T^n|^2] \leq C_0.
\end{equation*}
\end{lemma}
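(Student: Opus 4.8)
The plan is to start from the Itô expansion of $|V_t^n|^2$ in Lemma \ref{lemma_ito_V}, take expectations, and bound the right-hand side so as to control all three quantities at once. The two martingale integrals — the stochastic integral against $M$ and the integral $\int_t^T (Z_u^n)' dL_u Z_u^n$ against the martingale $L$ of \eqref{L_t} — have zero expectation, exactly as in the uniqueness argument leading to \eqref{star}, so after taking $E[\cdot]$ we are left with
$$E[|V_t^n|^2] + E\Big[\int_t^T \|Z_u^n\|_{X_u}^2 du\Big] = E[|\xi|^2] + 2E\Big[\int_t^T V_u^n f(u,V_u^n,Z_u^n) du\Big] + 2E\Big[\int_t^T V_u^n dK_u^n\Big].$$
First I would treat the driver term with the Lipschitz bound \eqref{Lipch}, the linear growth it gives via $f(u,0,0)$, and Young's inequality, writing $2 V_u^n f \le C_1 |V_u^n|^2 + |f(u,0,0)|^2 + \tfrac12 \|Z_u^n\|_{X_u}^2$, so that the $\|Z^n\|^2$ piece is absorbed on the left and $C_1$ depends only on $c',c''$. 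The delicate term is $\int_t^T V_u^n dK_u^n$. Since $dK_u^n = n(V_u^n-G_u)^- du \ge 0$, on the set where it is nonzero we have $V_u^n = G_u - (V_u^n-G_u)^-$, whence $V_u^n(V_u^n-G_u)^- \le G_u(V_u^n-G_u)^- \le G_u^+(V_u^n-G_u)^-$; integrating yields $\int_t^T V_u^n dK_u^n \le (\sup_{0\le u\le T} G_u^+)\,K_T^n$, and then $2E[(\sup_u G_u^+)K_T^n] \le \gamma^{-1}E[\sup_u (G_u^+)^2] + \gamma E[(K_T^n)^2]$ for a free parameter $\gamma>0$.

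The central obstacle is the circularity this introduces: the bound on $\int V^n dK^n$ brings in $E[(K_T^n)^2]$, which is itself one of the unknowns. I would break it with a bootstrap. From the identity above, dropping the nonnegative $Z$-term and applying Gronwall's lemma gives $\sup_t E[|V_t^n|^2] \le C_2(\Lambda + \gamma E[(K_T^n)^2])$, where $\Lambda$ collects the finite constants $E[|\xi|^2]$, $E[\int_0^T |f(u,0,0)|^2 du]$ and $\gamma^{-1}E[\sup_u(G_u^+)^2]$, all finite by \eqref{Con_f} and \eqref{Con_g}; keeping the $Z$-term instead gives $E[\int_0^T \|Z_u^n\|_{X_u}^2 du] \le C_3(\Lambda + \gamma E[(K_T^n)^2])$. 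Separately, evaluating \eqref{V_n} at $t=0$ expresses $K_T^n = V_0^n - \xi - \int_0^T f\,du + \int_0^T (Z_u^n)' dM_u$; squaring, taking expectations, bounding the $du$-integral by Cauchy-Schwarz and the growth of $f$, and applying Lemma \ref{Z2} to the stochastic integral, one obtains $E[(K_T^n)^2] \le C_4\big(1 + \sup_t E[|V_t^n|^2] + E[\int_0^T \|Z_u^n\|_{X_u}^2 du]\big)$.

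Finally I would substitute the first two estimates into the third, arriving at $E[(K_T^n)^2] \le C_5(1+\Lambda) + C_6\,\gamma\,E[(K_T^n)^2]$, where $C_5,C_6$ are independent of $n$ and of $\gamma$. The one subtlety requiring care is that $\gamma$ must be fixed before $\Lambda$ is declared a constant, since $\Lambda$ depends on $\gamma$ through $\gamma^{-1}E[\sup_u(G_u^+)^2]$: I would choose $\gamma$ small enough that $C_6\gamma \le \tfrac12$ and only then regard $\Lambda$ as a finite, $n$-independent constant. This absorbs the last term and bounds $E[(K_T^n)^2]$ uniformly in $n$; feeding that bound back into the estimates for $\sup_t E[|V_t^n|^2]$ and $E[\int_0^T \|Z_u^n\|_{X_u}^2 du]$ then yields the claim with a suitable $C_0$.
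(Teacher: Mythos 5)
Your proposal is correct and follows essentially the same route as the paper: the product rule expansion of $|V^n_t|^2$, the bound $\int_t^T V^n_u\,dK^n_u \le (\sup_u G_u^+)K_T^n$ followed by Young's inequality with a free small parameter, the representation of $K_T^n$ from the equation at $t=0$ estimated via Lemma \ref{Z2}, and absorption of the resulting $E[(K_T^n)^2]$ term by choosing the parameter small. The only cosmetic difference is that you close the loop on the $\int|V^n|^2$ term with Gronwall's lemma where the paper uses an exponential weight $e^{\beta t}$ with $\beta$ chosen large; these are interchangeable.
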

\begin{proof}
Let $\beta >0$ be an arbitrary constant. Since
\begin{align*}&E \left[ \int_t^T e^{\beta u} (V_u^n -G_u) dK_u^n\right]\\
&= E \left[\int_t^T n e^{\beta u} ((V_u^n -G_u)^+ (V_u^n -G_u)^-  - ((V_u^n -G_u)^-)^2) du \right]\\ & \leq 0,\end{align*}
 we use Lemma \ref{lemma_ito_V} to derive, for any $t \in [0,T]$,
\begin{align*}
&E \left[ e^{\beta t} |V_t^n|^2 \right]+ E \left[ \int_t^T \beta |V_u^n|^2 e^{\beta u} du \right]  + E \left[ \int_t^T e^{\beta u}\|Z_u^n\|^2_{X_u} du \right] \\
& = E \left[ e^{\beta T} |\xi|^2 \right] + 2 E \left[ \int_t^T e^{\beta u} V_u^n f(u, V_u^n, Z_u^n) du \right]+ 2 E \left[ \int_t^T e^{\beta u} V_u^n dK_u^n\right]\\
& \leq  E [e^{\beta T}|\xi|^2] + 2 E\left[ \int_t^T e^{\beta u} (|f(u, 0, 0)| + c |V_u^n| + c \|Z^n_u\|_{X_u}) |V_u^n|du\right] \\
& + 2 E  \left[ \int_t^T  e^{\beta u} G_u dK_u^n \right] 
\end{align*}
\begin{align}\label{apriory1}
\nonumber
& \leq E [e^{\beta T}|\xi|^2] + E \left[ \int_t^Te^{\beta u} |f(u, 0, 0)|^2 du \right]  + (1+ 2c+3c^2) E \left[\int_t^T e^{\beta u}|V_u^n|^2  du \right] \\
\nonumber
& + \frac{1}{3} E \left[ \int_t^T e^{\beta u}\|Z_u^n\|^2_{X_u} du\right]+2 e^{\beta T}E \left[K_T^n \sup_{0 \leq t\leq T} (G_t^+)  \right]  \\
\nonumber
& \leq E [e^{\beta T}|\xi|^2] + E \left[ \int_t^Te^{\beta u} |f(u, 0, 0)|^2 du \right] + (1+ 2c+3c^2) E \left[ \int_t^Te^{\beta u} |V_u^n|^2 du \right] \\
& + \frac{1}{3} E \left[\int_t^T e^{\beta u}\|Z_u^n\|^2_{X_u} du\right]+ \frac{e^{2\beta T}}{\alpha}E \left[ \sup_{0 \leq t \leq T} (G_t^+)^2\right] + \alpha E \left[ (K^n_T)^2\right],
\end{align}
where $\alpha>0$ is an arbitrary constant. Therefore, there exists a constant $C_1 >0$ such that for any $t \in [0,T]$,
\begin{align}\label{apriory2}
\nonumber
& E \left[e^{\beta t} |V_t^n|^2 \right]  +E \left[ \int_t^T \beta |V_u^n|^2 e^{\beta u} du \right] + \frac{2}{3} E \left[ \int_t^T e^{\beta u}\|Z_u^n\|^2_{X_u} du \right]  \\
& \leq C_1(1+ E \left[ \int_t^T e^{\beta u}|V_u^n|^2 du \right]) + \frac{e^{2\beta T}}{\alpha}E \left[ \sup_{0 \leq t \leq T} (G_t^+)^2\right] + \alpha E \left[ (K^n_T)^2\right].
\end{align}
We now give an estimate for $ E \left[ (K^n_T)^2\right]$. From \eqref{V_n}, we have
\[K_T^n  = V_0^n - \xi - \int_0^T f(u, V_u^n, Z_u^n) du + \int_0^T (Z_{u}^n)' dM_u.\]
Then
\begin{align*}
& E \left[ |K_T^n |^2\right] \\
& \leq 4 E \left[  |V_0^n|^2 + |\xi|^2 + |\int_0^T f(u, V_u^n, Z_u^n) du|^2 + |\int_0^T (Z_{u}^{n})'dM_u|^2 \right]\\
& \leq 4 E \left[ |V_0^n|^2 + |\xi|^2\right] \\
& + 4 T E \left[ \int_0^T (|f(u,0,0)|+ c |V_u^n| + c \|Z_u^n\|_{X_u})^2du\right] + 4 E \left[\int_0^T \|Z_u^n\|^2_{X_u} du\right]\\
& \quad \quad \text{(the last integral is obtained using Lemma \ref{Z2})} \\
& \leq 4 \left( E[|\xi|^2] + |V_0^n|^2 + 3T E \left[\int_0^T (|f(u,0,0)|^2+c^2 |V_u^n|^2 + c^2\|Z_u^n\|^2_{X_u}) du \right]\right)\\
& + 4 E \left[ \int_0^T \|Z_u^n\|^2_{X_u} du\right].
\end{align*}
So, there is a constant $C_2>C_1$ such that
\begin{equation}\label{estimate_K}
E [(K_T^n)^2] \leq C_2 \left( 1+ |V_0^n|^2] + E \left[ \int_0^T (|V_u^n|^2 + \| Z_u^n\|^2_{X_u}) du\right]\right).
\end{equation}
Therefore, in \eqref{apriory2}, set $\alpha = 1/3C_2$ to obtain
\begin{align*}
&E[e^{\beta t}|V_t^n|^2] +  E \left[ \int_t^T \beta |V_u^n|^2 e^{\beta u} du \right]+\frac{2}{3} E \left[ \int_t^T e^{\beta u}\|Z_u^n\|^2_{X_u}du \right]\\
 & \leq C_2 (1+ E \left[ \int_t^T e^{\beta u} |V_u^n|^2 du \right]) + 3C_2 e^{2\beta T}E \left[  \sup_{0 \leq t \leq T} (G_t^+)^2 \right] \\
&  +  \frac{1}{3} \left(1 + \sup_{0 \leq t \leq T} E \left[e^{\beta u}|V_t^n|^2 + \int_0^T e^{\beta u} (|V_u^n|^2 + \|Z_u^n\|^2_{X_u})du \right]\right).
\end{align*}
Taking the supremum over $t$, we know
\begin{align*}\label{grown}
& \frac{2}{3} \sup_{0\leq t \leq T}E[e^{\beta t} |V_t^n|^2] + (\beta - C_2 -\dfrac{1}{3}) E \left[ \int_0^T e^{\beta u}|V_u^n|^2 du\right] +\frac{1}{3} E \left[ \int_0^T e^{\beta u} \|Z_u^n\|^2_{X_u} du\right] \\
& \leq  C_2+\frac{1}{3} +  3C_2 e^{2\beta T}E \left[  \sup_{0 \leq t \leq T} (G_t^+)^2 \right].
\end{align*}
Set $\beta = C_2 +\dfrac{1}{3}$. Then, there are two constants $C_3>0$ and $C_4>0$ such that
\[\sup_{0\leq t \leq T}E[e^{\beta t} |V_t^n|^2] \leq C_3\]
and
\[E \left[ \int_0^T e^{\beta u} \|Z_u^n\|^2_{X_u} du\right] \leq C_4.\]
Hence, from \eqref{estimate_K}, we derive
\begin{equation}\label{C_Kn}
 E [|K_T^n|^2] \leq C_5,
\end{equation}
for some constant $C_5>0$. Therefore, there exists a constant $C_0>0$ such that for any $n \in \mathbb{N}$,
\begin{equation*}
\sup_{0\leq t \leq T} E [|V_t^n|^2] + E \left[ \int_0^T \|Z_u^n\|^2_{X_u}du \right]+E [|K_T^n|^2] \leq C_0.
\end{equation*}
\end{proof}
We prove the following:
\begin{lemma}\label{lemma_sup}
For any $n$, there is a constant $C>0$ such that for any $n \in \mathbb{N}$,
\begin{equation*}
E \left[ \sup_{0\leq t \leq T} |V_t^n|^2\right] < C.
\end{equation*}
\end{lemma}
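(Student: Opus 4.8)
The plan is to start from the It\^o expansion in Lemma \ref{lemma_ito_V}. Moving the nonpositive term $-\int_t^T\|Z_u^n\|^2_{X_u}\,du$ to the right-hand side (this only enlarges the bound) and writing $\int_t^T=\int_0^T-\int_0^t$ for the stochastic integral, one obtains an upper bound for $|V_t^n|^2$ in terms of five pieces: (a) $|\xi|^2$; (b) the drift term $2\int_0^T|V_u^n|\,|f(u,V_u^n,Z_u^n)|\,du$; (c) the reflection term $2\int_t^T V_u^n\,dK_u^n$; (d) the matrix-quadratic term $-\int_t^T (Z_u^n)'\,dL_u\,Z_u^n$; and (e) the martingale term $-2\int_t^T V_{u-}^n (Z_u^n)'\,dM_u$. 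I would then take the supremum over $t\in[0,T]$, take expectations, and bound each piece by a constant \emph{independent of} $n$, except that (c) and (e) will each contribute a small multiple $\epsilon\,E[\sup_t|V_t^n|^2]$ to be absorbed on the left.

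The routine pieces go as follows. Term (a) is just $E[|\xi|^2]<\infty$. For (b) I would use the Lipschitz bound \eqref{Lipch}, $|f(u,V_u^n,Z_u^n)|\le|f(u,0,0)|+c|V_u^n|+c\|Z_u^n\|_{X_u}$, together with Young's inequality; all the resulting integrals $E[\int_0^T|f(u,0,0)|^2du]$, $E[\int_0^T|V_u^n|^2du]$ and $E[\int_0^T\|Z_u^n\|^2_{X_u}du]$ are bounded by $C_0$ from Lemma \ref{estimateAO}. For (c), since $K^n$ is continuous and increasing, $\int_t^T V_u^n\,dK_u^n\le\sup_u|V_u^n|\,K_T^n$, and Young's inequality gives $2\sup_u|V_u^n|\,K_T^n\le\epsilon\sup_u|V_u^n|^2+\epsilon^{-1}(K_T^n)^2$, whose expectation is controlled by $E[(K_T^n)^2]\le C_0$. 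The jump-specific term (d) needs no martingale inequality: from $L=[X,X]-\langle X,X\rangle$ in \eqref{L_t} we have $\int_t^T(Z_u^n)'dL_u Z_u^n=\sum_{t<u\le T}((Z_u^n)'\Delta X_u)^2-\int_t^T\|Z_u^n\|^2_{X_u}du$, so its supremum over $t$ is dominated by $\sum_{0<u\le T}((Z_u^n)'\Delta X_u)^2+\int_0^T\|Z_u^n\|^2_{X_u}du$, whose expectation equals $2\,E[\int_0^T\|Z_u^n\|^2_{X_u}du]\le 2C_0$, because the $dL$ contribution has zero expectation.

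The heart of the argument is term (e), where I would invoke the Burkholder--Davis--Gundy inequality. Using $\int_t^T=\int_0^T-\int_0^t$ one has $E[\sup_t|\int_t^T V_{u-}^n(Z_u^n)'dM_u|]\le 2\,E[\sup_t|\int_0^t V_{u-}^n(Z_u^n)'dM_u|]$, and BDG bounds the latter by a constant times $E[(\int_0^T (V_{u-}^n)^2 (Z_u^n)'d[X,X]_u Z_u^n)^{1/2}]$, using $[M,M]=[X,X]$. Factoring $\sup_u|V_u^n|$ out of the bracket and applying Young's inequality produces a term $\epsilon\,E[\sup_u|V_u^n|^2]$ plus a constant multiple of $E[\int_0^T(Z_u^n)'d[X,X]_u Z_u^n]=E[\int_0^T\|Z_u^n\|^2_{X_u}du]\le C_0$, again since the $dL$ part integrates to zero in expectation. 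Collecting (a)--(e) yields $E[\sup_t|V_t^n|^2]\le C+2\epsilon\,E[\sup_t|V_t^n|^2]$ with $C$ and $\epsilon$ independent of $n$; choosing $\epsilon$ with $2\epsilon\le\tfrac12$ and absorbing gives $E[\sup_t|V_t^n|^2]\le 2C$, the desired uniform bound.

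The main obstacle I anticipate is the absorption step itself: to move $2\epsilon\,E[\sup_t|V_t^n|^2]$ to the left one must first know this quantity is \emph{finite}, which is not guaranteed by $V^n\in L^2_{\mathcal F}(0,T;\mathbb R)$ alone (that only controls $E[\int_0^T|V_u^n|^2du]$). I would secure this a priori either by the representation $V_t^n=E[\xi+\int_t^T f_n(u,V_u^n,Z_u^n)du\mid\mathcal F_t]$ coming from \eqref{V_n} (the stochastic integral having zero conditional expectation) followed by Doob's maximal inequality, which gives $E[\sup_t|V_t^n|^2]<\infty$ with a possibly $n$-dependent constant; or by running the entire estimate up to a localizing sequence of stopping times $\tau_k$, where the martingale terms are genuine martingales and the suprema are finite, and then letting $k\to\infty$ via Fatou's lemma. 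Either route establishes finiteness, after which the BDG estimate above upgrades it to the constant $C$ that is uniform in $n$.
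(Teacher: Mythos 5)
Your argument is correct, but it takes a genuinely different route from the paper. You expand $|V_t^n|^2$ via the It\^o formula of Lemma \ref{lemma_ito_V} and then control the cross martingale term $\int_t^T V_{u-}^n (Z_u^n)'dM_u$ with the Burkholder--Davis--Gundy inequality, paying for it with an $\epsilon\,E[\sup_t|V_t^n|^2]$ term that must be absorbed; you correctly flag that absorption requires knowing $E[\sup_t|V_t^n|^2]<\infty$ a priori, and your two fixes (Doob on the conditional-expectation representation of $V^n$, or localization plus Fatou) both work. The paper avoids all of this: it never applies It\^o's formula here, but instead isolates $V_t^n$ directly from equation \eqref{V_n}, writes $|V_t^n|^2 \leq 4\bigl(|\xi|^2 + |\int_t^T f\,du|^2 + |K_T^n|^2 + |\int_t^T (Z_u^n)'dM_u|^2\bigr)$, bounds the drift by Cauchy--Schwarz and the Lipschitz condition, and handles the only stochastic term via Doob's $L^2$ maximal inequality, $E[\sup_t|\int_t^T (Z_u^n)'dM_u|^2]\leq 10\,E[\int_0^T\|Z_u^n\|^2_{X_u}du]$, with everything then controlled by Lemma \ref{estimateAO}. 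The paper's route is shorter, needs no BDG inequality (which is never stated in the paper for these purely discontinuous martingales), produces no term to absorb, and hence sidesteps the a priori finiteness issue entirely; your route is the standard one from the Brownian RBSDE literature and has the advantage of not squaring the increasing process (it uses $K_T^n$ only through $\sup|V|\cdot K_T^n$ and the quadratic variation identity $E[\sum((Z_u^n)'\Delta X_u)^2]=E[\int_0^T\|Z_u^n\|^2_{X_u}du]$), but here both approaches ultimately rest on the same uniform bounds of Lemma \ref{estimateAO}, so the extra machinery buys nothing in this instance.
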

\begin{proof}
We know for any $n \in \mathbb{N}$,
\begin{align*}
|V_t^n|^2 & \leq 4|\xi|^2 + 4|\int_t^T f(u, V_u^n, Z_u^n) du|^2 + 4|K_T^n|^2 + 4|\int_t^T( Z_{u}^n)' dM_u|^2 \\
& \leq 4|\xi|^2 + 12T \int_t^T (|f(u,0,0)|^2 + c^2 |V_u^n|^2 + c^2 \|Z_u^n\|^2_{X_u}) du \\
& + 4 |K_T^n|^2 + 4|\int_t^T  (Z_{u}^n)' dM_u|^2.
\end{align*}
Taking the supremum over $t$, we deduce
\begin{align}\label{2star}
\nonumber
 \sup_{0\leq t \leq T} |V_t^n|^2 & \leq 4 |\xi|^2 + 12 T \int_0^T |f(u,0,0)|^2  du \\
\nonumber
& + 12Tc^2\int_0^T |V_u^n|^2 du + 12Tc^2 \int_0^T \|Z_u^n\|^2_{X_u} du \\
& +4 |K_T^n|^2+ 4 \sup_{0\leq t \leq T}|\int_t^T (Z_{u}^n)'dM_u|^2.
\end{align}
Using Doob's inequality and Lemma \ref{Z2}, we obtain
\begin{align*}
 &E \left[ \sup_{0\leq t \leq T}|\int_t^T ( Z_{u}^n)' dM_u|^2 \right]\\
 & = E \left[ \sup_{0\leq t \leq T}|\int_0^T ( Z_{u}^n)' dM_u-\int_0^t ( Z_{u}^n)' dM_u|^2 \right] \\
  & \leq 2 E \left[ |\int_0^T ( Z_{u}^n)' dM_u|^2 \right]+2 E \left[ \sup_{0\leq t \leq T}|\int_0^t ( Z_{u}^n)' dM_u|^2 \right] \\
& \leq 10 E \left[ |\int_0^T( Z_{u}^n)' dM_u|^2\right]=10 E \left[ \int_0^T \|Z_u^n\|^2_{X_u}du\right].
\end{align*}
Also,
\begin{align*}
 E \left[ \int_0^T |V_u^n|^2 du \right]  = \int_0^T E [|V_u^n|^2] du  \leq T \sup_{0\leq t \leq T} E [|V_t^n|^2].
\end{align*}
By Lemma \ref{estimateAO}, there is a constant $C>0$ such that the result holds.
\end{proof}
Now, we prove:
\begin{lemma}\label{limit_V}
There is process $\{V_t,~ t \in [0,T]\}$ such that  $V\in L^2_{\mathcal{F}}(0,T;\mathbb{R})$,
\[E \left[ \int_0^T (V_t - V_t^n)^2 dt \right] \rightarrow 0,~~ \text{ as } n \rightarrow \infty,\]
and
\[E \left[ \sup_{0\leq t\leq T} |V_t|^2 \right]  \leq C.\]
\end{lemma}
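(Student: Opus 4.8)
The plan is to exploit the penalized structure of \eqref{V_n}: the sequence $(V^n)$ should be monotone in $n$, so its pointwise limit exists, and the $n$-independent a priori bounds of Lemmas \ref{estimateAO} and \ref{lemma_sup} then upgrade this to convergence in $L^2_{\mathcal{F}}(0,T;\mathbb{R})$ together with the claimed supremum bound.

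First I would establish monotonicity, $V^n_t \leq V^{n+1}_t$ for all $t\in[0,T]$, a.s. Both $V^n$ and $V^{n+1}$ solve a (non-reflected) BSDE with the same terminal value $\xi$ and drivers $f_n(u,v,z)=f(u,v,z)+n(v-G_u)^-$ and $f_{n+1}$. Since $(v-G_u)^-\geq 0$, we have $f_n\leq f_{n+1}$ pointwise, and in particular $f_n(u,V^{n+1}_u,Z^{n+1}_u)\leq f_{n+1}(u,V^{n+1}_u,Z^{n+1}_u)$. By \eqref{fnlip} the Lipschitz constant of each $f_n$ in the $z$ variable is $c''$, which satisfies \eqref{c''}, i.e. Assumption \ref{ass0}; hence the comparison result Lemma \ref{CT} applies with $(f_1,\xi_1)=(f_n,\xi)$ and $(f_2,\xi_2)=(f_{n+1},\xi)$ and yields $V^n_t\leq V^{n+1}_t$ for all $t$, a.s. This step is the crux of the argument, since everything downstream rests on the monotonicity it produces.

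Next, for each fixed $t$ the sequence $(V^n_t)_n$ is nondecreasing and, by Lemma \ref{estimateAO}, bounded in $L^2$ uniformly in $n$, hence converges a.s. to a finite limit, which I define as $V_t:=\lim_n V^n_t=\sup_n V^n_t$. The process $V$ is adapted, being a pointwise a.s. limit of adapted processes. To get convergence in $L^2_{\mathcal{F}}(0,T;\mathbb{R})$, I would apply the monotone convergence theorem to the nonnegative increasing sequence $(V^n_t-V^1_t)^2\uparrow(V_t-V^1_t)^2$ on $[0,T]\times\Omega$: since $\sup_n E[\int_0^T|V^n_t|^2\,dt]<\infty$ by Lemma \ref{lemma_sup}, the limit $E[\int_0^T(V_t-V^1_t)^2\,dt]$ is finite, so $V\in L^2$. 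Then $(V_t-V^n_t)^2$ decreases to $0$ and is dominated by the integrable $(V_t-V^1_t)^2$, so dominated convergence gives $E[\int_0^T(V_t-V^n_t)^2\,dt]\to 0$.

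For the uniform bound I would argue that for each $t$, $|V_t|=\lim_n|V^n_t|\leq\liminf_n\sup_{0\leq s\leq T}|V^n_s|$; taking the supremum over $t$ (the right-hand side being independent of $t$) gives $\sup_{0\leq t\leq T}|V_t|\leq\liminf_n\sup_{0\leq s\leq T}|V^n_s|$. Fatou's lemma combined with Lemma \ref{lemma_sup} then yields $E[\sup_{0\leq t\leq T}|V_t|^2]\leq\liminf_n E[\sup_{0\leq s\leq T}|V^n_s|^2]\leq C$. The remaining delicate point is the RCLL regularity implicit in the assertion $V\in L^2_{\mathcal{F}}(0,T;\mathbb{R})$: only $L^2(dt\times dP)$ convergence is available here, which does not transfer path regularity, so I expect the RCLL property to require passing to an appropriate modification (to be confirmed when $V$ is identified as the reflected solution in the subsequent steps). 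This, rather than the measure-theoretic passages above, is where the real care is needed.
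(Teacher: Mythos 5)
Your proposal is correct and follows essentially the same route as the paper: monotonicity of $V^n$ via the comparison theorem (Lemma \ref{CT}) applied to the increasing drivers $f_n$, definition of $V$ as the pointwise supremum, monotone/dominated convergence for the $L^2(dt\times dP)$ statements, and a Fatou-type argument with Lemma \ref{lemma_sup} for the supremum bound (the paper instead introduces the limit random variable $H$ of $\sup_t V^n_t$ and splits $|V^n_t|$ by the sign of $V_t$ to apply Levi's lemma, but this is only a difference in bookkeeping). Your closing remark is also apt: the paper defers the RCLL regularity of $V$ to the subsequent step, where the a.s.\ uniform convergence $\sup_t|V^n_t-V_t|\to 0$ is obtained via Lemma \ref{polya}.
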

\begin{proof} Since $f_n(\cdot, \cdot, \cdot)$ is increasing in $n$, that is, for any $(t,y,z)\in [0,T]\times \mathbb{R} \times \mathbb{R}^N$, $n\in\mathbb{N}$,
\[f_n(t, y, z) \leq f_{n+1}(t, y, z),\]
moreover, $f_n$ satisfies \eqref{fnlip} and the constant $c''$ satiisfies \eqref{c''},
by Lemma \ref{CT} we derive for any $n \in \mathbb{R}^N$,
$$P(V_t^n \leq V_t^{n+1},  ~\text{for any}~ t \in [0,T])=1.$$
 That is, for any $n \in \mathbb{N}$, there exists a subset $B_n \subseteq \Omega$ and $\hat{B} \subseteq \Omega$ such that $\hat{B} = \bigcap\limits_{n=1}^{\infty} B_n$, $P(\hat{B}) =1$ and for any $\omega \in \hat{B}$, $V_t^n(\omega) \leq V_t^{n+1}(\omega)$, $t \in [0,T]$.
 For any $\omega \in \Omega$, define:
\[V_t(\omega) = \sup_{n \in \mathbb{N}} V_t^n (\omega), ~ \quad t \in [0,T].\]
So $ P(V_t^n \uparrow V_t$, ~$t \in [0,T])=1$. Therefore,
 \[P (\mathbb{I}_{\{V_t > 0\}}|V_t^n|\uparrow  \mathbb{I}_{\{V_t > 0\}} |V_t|, ~~t \in [0,T]) =1\]
and
 \[P (\mathbb{I}_{\{V_t \leq 0\}}|V_t^n| \downarrow  \mathbb{I}_{\{V_t \leq 0\}} |V_t|, ~~ t \in [0,T]) =1.\]
By Levi's Lemma and Lemma \ref{lemma_sup}, we deduce
\[E \left[ \int_0^T |V_t|^2 dt \right] = \lim_{n \rightarrow \infty } E \left[ \int_0^T |V_t^n|^2 dt \right] \leq \lim_{n\rightarrow \infty}(\sup_{0 \leq t \leq T} E [|V_t^n|^2] T) \leq CT.\]
Then $V\in L^2_{\mathcal{F}}(0,T;\mathbb{R})$ and $|V_t| < \infty$, a.e, a.s. So $V_t^n - V_t \uparrow 0,$ a.e, a.s. Again, by Levi's Lemma, we have
\[E \left[ \int_0^T |V_t^n -V_t|^2 dt \right]\rightarrow 0, \text{  as  } n \rightarrow \infty.\]
Since $\{\sup\limits_{0 \leq t \leq T} V_t^n, n\in \mathbb{N}\}$ is also an increasing sequence, we know there exists a random variable $H$ such that for any $\omega \in \Omega$:
\[\sup_{n \in \mathbb{N}}\sup_{0\leq t \leq T} V_t^n (\omega) = H(\omega),\]
so
\[\sup_{0\leq t \leq T} V_t^n \uparrow H,  \text{  a.s.}\]
Also, by Levi's lemma, we obtain
\[\lim_{n \rightarrow \infty} E[\sup_{0 \leq t \leq T}|V_t^n|^2] = E [|H|^2].\]
By Lemma \ref{lemma_sup}, we deduce that $E [|H|^2] \leq C$. Hence,
\begin{align*}
 E \left[ \sup_{0 \leq t \leq T} |V_t|^2\right] & = E [\sup_{0 \leq t \leq T} (\lim_{n \rightarrow \infty} |V_t^n|^2)] \\
&  \leq E \left[ \sup_{0 \leq t \leq T} (\lim_{n \rightarrow \infty} (\sup_{0 \leq t \leq T} |V_t^n|^2))\right]\\
& \leq E [\sup_{0\leq t \leq T }|H|^2]  = E [|H|^2]\leq C.
\end{align*}
Hence, we proved Lemma \ref{limit_V}.
\end{proof}
Now, consider the same set $\hat{B}$ in the proof of Lemma \ref{limit_V}. Also, by Lemma \ref{limit_V} $\sup\limits_{0\leq t \leq T}|V_t| < \infty$, a.s., that is, there is a subset $\bar{B} \subseteq \Omega$ such that for any $\omega \in \bar{B}$, $|V_t(\omega)| < \infty$ for any $t\in [0,T]$ and $P(\bar{B})=1$. Then, for $\omega \in \hat{B}\cap\bar{B}$,
\[V_t^n(\omega)-V_t(\omega) \uparrow 0 , \quad t \in [0,T].\]  By Lemma \ref{polya}, we derive for any $\omega \in \hat{B}\cap \bar{B}$,
\[\lim_{n\rightarrow \infty} \sup_{0\leq t \leq T} |V_t^n(\omega) - V_t(\omega)|^2 =0 .\]
Since $P(\hat{B}\cap\bar{B}) =1$, it follows that:
\[\lim_{n\rightarrow \infty} E \left[ \sup_{0 \leq t \leq T} |V_t^n-V_t|^2\right] =0.\]
Hence, $\{V^n\}_{n\in \mathbb{N}}$ is a uniform Cauchy sequence, that is:
\begin{equation} \label{cauchyV}
E \left[ \sup_{0 \leq t \leq T} |V_t^n - V_t^p|^2 \right] \rightarrow 0, \text{ as } n,p \rightarrow \infty.
\end{equation}
 Now, using Lemma \ref{lemma_ito_V} for $|V_t^n - V_t^p|^2$, and taking the expectation, gives:
\begin{align*}
& E[|V_t^n - V_t^p|^2] + E \left[ \int_t^T \|Z_u^n - Z_u^p\|_{X_u}^2 du\right]\\
& =  2 E \left[ \int_t^T (f(u, V_u^n, Z_u^n) - f(u,V_u^p,Z_u^p)) (V_u^n - V_u^p) du\right] \\
&  + 2 E \left[ \int_t^T (V_u^n - V_u^p) d(K_u^n -K_u^p)\right].
\end{align*}
Noting that $$dK_u^n = n (V_u^n-G_u)^- du$$ then
\begin{align*}
& E \left [ \int_t^T (V_u^n -V_u^p) d(K_u^n -K_u^p) \right] \\
& = E \left[ \int_t^T (V_u^n -G_u) dK_u^n \right] -  E \left[ \int_t^T (V_u^n -G_u) dK_u^p \right] \\
& - E \left[ \int_t^T (V_u^p -G_u) dK_u^n \right] + E \left[ \int_t^T (V_u^p -G_u) dK_u^p \right]\\
& \leq E \left[ \int_t^T (V_u^n -G_u)^- dK_u^p \right] + E \left[ \int_t^T (V_u^p-G_u)^- dK_u^n\right].
\end{align*}
Thus,
\begin{align*}
& E[|V_t^n -  V_t^p|^2] + E \left[ \int_t^T \|Z_u^n - Z_u^p\|_{X_u}^2 du\right]\\
& \leq 2c E \left[ \int_t^T \left( |V_u^n-V_u^p|^2 + |V_u^n-V_u^p|\cdot \|Z_u^n - Z_u^p\|_{X_u}\right) du\right] \\
& + 2E \left[ \int_t^T (V_u^n - G_u)^- dK_u^p \right] + 2 E \left[ \int_t^T (V_u^p -G_u)^- dK_u^n \right]\\
\nonumber
 & \leq (2c+2c^2)  E \left[ \int_t^T  |V_u^n-V_u^p|^2  du \right] + \frac{1}{2} E \left[ \int_t^T \|Z_u^n - Z_u^p\|_{X_u}^2 du \right] \\
 &\quad + 2E \left[ \int_t^T (V_u^n - G_u)^- dK_u^p \right] + 2 E \left[ \int_t^T (V_u^p -G_u)^- dK_u^n \right].
\end{align*}
That is,
\begin{align}\label{Z_n_p}
\nonumber
& E \left[ \int_t^T \|Z_u^n - Z_u^p\|_{X_u}^2 du\right] \\
\nonumber
& \leq (4c+4c^2)  E \left[ \int_t^T  |V_u^n-V_u^p|^2  du \right] \\
& + 4E \left[ \int_t^T (V_u^n - G_u)^- dK_u^p \right] + 4 E \left[ \int_t^T (V_u^p -G_u)^- dK_u^n \right].
\end{align}
\begin{lemma}\label{V_G}
 \[E \left[ \sup_{0 \leq t \leq T} |(V_t^n - G_t)^-|^2 \right] \rightarrow 0, \text{ as } n\rightarrow \infty.\]
\end{lemma}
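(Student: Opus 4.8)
The plan is to use the uniform a priori bound on $K^n_T$ to push the monotone limit $V$ above the obstacle --- first for almost every $t$, then for every $t$ --- and afterwards to dominate $(V^n_t-G_t)^-$ by $|V^n_t-V_t|$ uniformly in $t$, so that the conclusion drops out of the uniform $L^2$ convergence of $V^n$ to $V$ already proved just before \eqref{cauchyV}.

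First I would show that $V$ dominates $G$ in the time-integrated sense. Since $dK^n_u = n(V^n_u-G_u)^-\,du$ and $E[(K^n_T)^2]\le C_5$ by \eqref{C_Kn},
\[E\left[\left(\int_0^T (V^n_u-G_u)^-\,du\right)^2\right] = \frac{1}{n^2}E\big[(K^n_T)^2\big] \le \frac{C_5}{n^2}\longrightarrow 0,\]
so $\int_0^T (V^n_u-G_u)^-\,du\to 0$ in $L^2$, hence a.s.\ along a subsequence. On the other hand $V^n\uparrow V$, and since $x\mapsto(x-G_u)^-$ is nonincreasing the nonnegative integrands $(V^n_u-G_u)^-$ decrease pointwise to $(V_u-G_u)^-$; as $\int_0^T (V^1_u-G_u)^-\,du<\infty$ a.s.\ (using $V^1\in L^2_{\mathcal F}(0,T;\mathbb R)$ and $\sup_u G_u^+<\infty$ a.s.\ from \eqref{Con_g}), monotone convergence gives $\int_0^T (V^n_u-G_u)^-\,du\downarrow\int_0^T (V_u-G_u)^-\,du$ a.s. Comparing the two limits yields $\int_0^T (V_u-G_u)^-\,du=0$ a.s., whence $(V_u-G_u)^-=0$ for a.e.\ $u$, a.s., i.e.\ $V_u\ge G_u$ for a.e.\ $u$, a.s.

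Next I would upgrade this to every $t$. The convergence $V^n\to V$ is a.s.\ uniform on $[0,T]$ (this is the fact established just before \eqref{cauchyV}), so, since each $V^n$ is RCLL, so is the limit $V$; as the obstacle $G$ is right-continuous, the inequality $V_u\ge G_u$ valid for a.e.\ $u$ propagates by right-continuity to give $V_t\ge G_t$ for every $t\in[0,T]$, a.s. Thus $(V_t-G_t)^-=0$ for all $t$, a.s. Because $x\mapsto x^-$ is $1$-Lipschitz,
\[(V^n_t-G_t)^- \le (V_t-G_t)^- + \big|(V^n_t-G_t)^- -(V_t-G_t)^-\big| \le |V^n_t-V_t|,\]
hence $\sup_{0\le t\le T}(V^n_t-G_t)^-\le\sup_{0\le t\le T}|V^n_t-V_t|$. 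Squaring, taking expectations, and invoking $\lim_{n\to\infty}E[\sup_{0\le t\le T}|V^n_t-V_t|^2]=0$ (again established just before \eqref{cauchyV}) gives $E[\sup_{0\le t\le T}((V^n_t-G_t)^-)^2]\to 0$, as claimed.

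The step I expect to be the main obstacle is the passage from the almost-everywhere inequality $V_u\ge G_u$ to the everywhere inequality that makes $\sup_t(V_t-G_t)^-$ vanish: a null set of times is invisible to the integral estimate of the second paragraph yet can contribute to the supremum. This is precisely where the right-continuity of $V$ --- inherited from the uniform convergence of the RCLL processes $V^n$ --- and of the obstacle $G$ is indispensable; without such regularity the supremum in the statement need not tend to $0$, even though its time-integral does.
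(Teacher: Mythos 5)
Your proof is correct in substance, but it follows a genuinely different route from the paper's. The paper never touches the identity $\int_0^T (V^n_u-G_u)^-\,du = K^n_T/n$; instead it introduces an auxiliary penalized BSDE with driver $F_n(u,v,z)=f(u,V^n_u,Z^n_u)+n(G_u-v)$, uses the comparison theorem (Lemma \ref{CT}) to get $\tilde V^n\le V^n$, derives the explicit representation \eqref{vto} of $\tilde V^n_\tau$ at an arbitrary stopping time $\tau$ via It\^o's formula applied to $e^{-n\tau}\tilde V^n_\tau$, passes to the limit to obtain $V_\tau\ge \xi 1_{\{\tau=T\}}+G_\tau 1_{\{\tau<T\}}$ for \emph{every} stopping time, and then invokes the Section Theorem to upgrade this to indistinguishability of $(V-G)^-$ from $0$. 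Your argument replaces all of that machinery with the uniform bound $E[(K^n_T)^2]\le C_5$ from \eqref{C_Kn}, monotone convergence of the decreasing integrands $(V^n_u-G_u)^-$, and a right-continuity argument to pass from the Lebesgue-a.e.\ inequality $V_u\ge G_u$ to the everywhere inequality. Both proofs then finish identically with the $1$-Lipschitz bound $(V^n_t-G_t)^-\le |V^n_t-V_t|$ and the uniform convergence established before \eqref{cauchyV} (your direct appeal to $E[\sup_t|V^n_t-V_t|^2]\to 0$ is in fact cleaner than the paper's detour through dominated convergence). What each approach buys: yours is far more elementary, avoiding the auxiliary BSDE, the comparison theorem and the Section Theorem entirely; the paper's works for an obstacle that is merely optional, whereas yours genuinely needs $G$ to be right-continuous --- a hypothesis the theorem does not state explicitly (only \eqref{Con_g} is assumed), though it is the standard setting and is implicitly needed elsewhere. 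Two shared caveats, not defects relative to the paper: your right-limit argument says nothing at $t=T$, so like the paper you implicitly need $\xi\ge G_T$; and both proofs rely on $V$ being right-continuous, which comes from the a.s.\ uniform convergence of the RCLL processes $V^n$ obtained just before \eqref{cauchyV}.
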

\begin{proof}
As $V_t^n \geq V_t^0$, replace $G_t$ by $G_t \vee V^0_t$. Since
$E [\sup\limits_{0 \leq t\leq T} |V^0_t|^2] < \infty,$ we have \[E [\sup_{0 \leq t\leq T} |G_t \vee V^0_t|^2] < \infty.\]We shall compare $V_t$ and $G_t$. For $n \in \mathbb{N}$, consider the following BSDE for the Markov chain:
\[\tilde{V}_t^n = \xi + \int_t^T F_n(u, \tilde{V}_u^n, \tilde{Z}_u^n)du - \int_t^T( \tilde{Z}_{u}^n)' dM_u,\]
where $F_n(u, v, z) = f(u, Y_u^n, Z_u^n) + n (G_u - v)$. Then, by Lemma \ref{existence}, for each $n \in \mathbb{N}$, there exists a unique solution $(\tilde{V}^n,\tilde{Z}^n)\in L^2_{\mathcal{F}}(0,T;\mathbb{R})\times P^2_{\mathcal{F}}(0,T;\mathbb{R}^N)$ to the above BSDE. As $ (G_u - v) \leq (v-G_u)^- $ for any $u \in [0,T]$, it follows that $F_n(u, v, z) \leq f_n(u,v,z)$ for any $u\in[0,T]$, $(v, z) \in\mathbb{R}\times \mathbb{R}^N$. Hence, from Lemma \ref{CT}, $$P(\tilde{V}_t^n \leq V_t^n, \text{ for any } t\in [0,T])=1.$$
Let $\tau \in [0,T]$ be a stopping time. By Lemma \ref{BSDEST}, the following BSDE for the Markov chain with stopping time $\tau$
\[\tilde{V}_{\tau}^n = \xi + \int_{\tau}^T F_n(u, \tilde{V}_u^n, \tilde{Z}_u^n)du - \int_{\tau}^T( \tilde{Z}_{u}^n)' dM_u\]
has a unique solution.
  Then, applying Ito's formula to $e^{-n \tau} \tilde{V}_{\tau}^n$, we have
\begin{align*}
e^{-nT}\xi - e^{-n\tau} \tilde{V}_{\tau}^n & = \int_{\tau}^T e^{-nu} \left( - f(u, V_u^n, Z_u^n) - n (G_u - \tilde{V}_u^n)\right) du \\
& \quad + \int_{\tau}^T e^{-nu}( \tilde{Z}_{u}^n)'dM_u -n \int_t^T \tilde{V}_u^n e^{-nu} du.
\end{align*}
Rearranging and taking the expectation given $\mathcal{F}_{\tau}$, we derive
\begin{equation}\label{vto}
\tilde{V}^n_{\tau} = E \left[ e^{-n(T - \tau)}\xi + \int_{\tau}^T e^{-n(u-\tau)} f(u,V_u^n, Z_u^n) du + n \int_{\tau}^T e^{-n(u-\tau)}G_u du| \mathcal{F}_\tau\right].
\end{equation}
It is easy to see that as $n \rightarrow \infty$,
\[e^{-n(T - \tau)}\xi +  n \int_{\tau}^T e^{-n(u-\tau)}G_u du \rightarrow \xi 1_{\{\tau=T\}} + G_{\tau}1_{\{\tau<T\}},\]
a.s, and in mean square. So
\begin{equation}\label{E1_vto}
E \left[ e^{-n(T - \tau)}\xi +  n \int_{\tau}^T e^{-n(s-\tau)}G_u du| \mathcal{F}_{\tau}\right] \rightarrow E\left[\xi 1_{\{\tau=T\}} + G_{\tau}1_{\{\tau<T\}}|\mathcal{F}_{\tau} \right]
\end{equation}
in mean square.
 Also, by Holder's inequality, we know
\begin{align*}
\lvert  \int_{\tau}^T e^{-n(u-\tau)} f(u,V_u^n, Z_u^n) du \lvert & \leq \left( \int_{\tau}^T  e^{-2n(u-\tau)} du \right)^{1/2} \left( \int_{\tau}^T |f(u,V_u^n, Z_u^n)|^2 du\right)^{1/2}\\
& \leq  \left( \int_{\tau}^T  e^{-2n(u-\tau)} du \right)^{1/2} \left( \int_{0}^T |f(u,V_u^n, Z_u^n)|^2 du\right)^{1/2} \\
& \leq (\frac{1}{2n} (1- e^{-2n(T-\tau)}))^{1/2}  \left( \int_{0}^T |f(u,V_u^n, Z_u^n)|^2 du\right)^{1/2} \\
& \leq \frac{1}{\sqrt{2n}}  \left( \int_{0}^T |f(u,V_u^n, Z_u^n)|^2 du\right)^{1/2}.
\end{align*}
Hence,
\begin{equation}\label{E2_vto}
E \left[ \int_{\tau}^T e^{-n(u-\tau)} f(u,V_u^n, Z_u^n) du | \mathcal{F}_{\tau}\right] \rightarrow 0
\end{equation}
in mean square, as $n \rightarrow \infty.$ Therefore, from \eqref{vto}, \eqref{E1_vto} and \eqref{E2_vto},
\[\tilde{V}_{\tau}^n \rightarrow \xi 1_{\{\tau=T\}} + G_{\tau}1_{\{\tau<T\}}\]
in mean square. Since $V_{\tau}^n \leq V_{\tau}$, a.s., and $V_{\tau}^n \geq \tilde{V}_{\tau}^n$, we obtain
\[V_{\tau} \geq \xi 1_{\{\tau=T\}} + G_{\tau}1_{\{\tau<T\}},\]
and it follows that $V_{\tau} \geq G_{\tau}$, a.s that is $(V_{\tau} - G_{\tau})^- =0$, a.s. Therefore, by the Section Theorem (\cite{Della} page 220 or \cite{elliott} Corollary 6.25), we have
$$P ( (V_t - G_t)^- =0, ~~ t\in [0,T]) =1.$$
So
$$P ( (V_t^n - G_t)^- \downarrow 0, ~~ t \in [0,T])=1 .$$
Noting, for a.s. $\omega \in \Omega$,
\begin{align*}
(V_t^n - G_t)^- & = \frac{1}{2} (|V_t^n -G_t|- (V_t^n -G_t))\\
& \leq \frac{1}{2} (|V_t^n -V_t|+ |V_t-G_t|- (V_t^n-V_t) -(V_t -G_t)) \\
& = \frac{1}{2} (|V_t^n-V_t|+ V_t-G_t - (V_t^n -V_t) - (V_t-G_t)) \\
& \leq |V_t^n -V_t|, \text{ for any } t \in [0,T],
\end{align*}
we deduce
\[0 \leq \sup_{0 \leq t \leq T} (V_t^n - G_t)^-  \leq \sup_{0 \leq t \leq T}  |V_t^n -V_t|.\]
Since $\lim\limits_{n \rightarrow \infty}\sup\limits_{0 \leq t \leq T}  |V_t^n -V_t| = 0, \text{ a.s.}$, we obtain
$$ \lim_{n \rightarrow \infty} \sup_{0\leq t \leq T} (V_t^{n} - G_t)^- = 0, \text{ a.s.}$$
As,
\[(V_t^n -G_t)^- \leq (G_t - V_t^0)^+ \leq |G_t| + |V_t^0|,\]
 $E \left[ \sup\limits_{0 \leq t \leq T} G_t^2 \right] <\infty$ and $E \left[ \sup\limits_{0 \leq t \leq T} (V^0_t)^2 \right] <\infty$, then the result follows from the dominated convergence theorem.\\
\end{proof}
\indent Returning to \eqref{Z_n_p}, we have
\begin{align*}
E \left[ \int_t^T (V_u^p - G_u)^- dK_u^n \right] & \leq E \left[ \int_0^T \sup_{0 \leq u\leq T}(V_u^p - G_u)^- dK_u^n \right] \\
& \leq E \left[  \sup_{0 \leq t\leq T}(V_t^p - G_t)^- K^n_T\right] \\
& \leq \left( E \left[  \sup_{0 \leq t\leq T}|(V_t^p - G_t)^-|^2\right]\right)^{1/2}\left( E \left[ |K_T^n|^2\right]\right)^{1/2}.
\end{align*}
Hence, from \eqref{C_Kn} and Lemma \ref{V_G}, we deduce as $n,p \rightarrow \infty$,
\begin{equation} \label{v_g_k_0}
 E \left[ \int_t^T (V_u^p - G_u)^- dK_u^n \right] + E \left[ \int_t^T (V_u^n - G_u)^- dK_u^p \right] \rightarrow 0.
\end{equation}
It follows from \eqref{Z_n_p}, \eqref{v_g_k_0} and Lemma \ref{limit_V} that as $n,p\rightarrow \infty$:
\begin{align} \label{v_z_0} E \left[ \int_0^T  \|Z_u^p - Z_u^n\|^2_{X_u} du \right] \rightarrow 0 .
\end{align}
Consider the factor space of equivalence classes of processes in $P^2_{\mathcal{F}}(0,T;\mathbb{R}^N)$. An equivalence class is just all processes which differ by a null process. On that space the semi norm is actually a norm and so the space is complete. Then there exists a process $Z\in P^2_{\mathcal{F}}(0,T;\mathbb{R}^N)$ such that as $n\rightarrow \infty$,
$ E \left[ \int_0^T  \|Z_u^n- Z_u\|^2_{X_u} du \right] \rightarrow 0 .$
Now
\begin{align*}
& K_t^n - K_t^p \\
& = (K_T^n-K_T^p) + \int_t^T( f(u,V_u^n,Z_u^n) -f(u,V_u^p, Z_u^p))du \\
& - (V_t^n - V_t^p) - \int_t^T( Z_{u}^n-Z_{u}^p )'dM_u
\end{align*}
\begin{align*}
& = (V_0^n-V_0^p) - \int_0^t( f(u, V_u^n, Z_u^n)- f(u, V_u^p, Z_u^p)) du + \int_0^t ( Z_{u}^n - Z_{u}^p)'dM_u \\
& - (V_t^n - V_t^p).
\end{align*}
Using Doob's inequality and Lemma \ref{Z2} on the last equation, we derive:
\begin{align*}\label{kpn2}
\nonumber
&E[\sup_{t\in[0,T]}|K_t^n - K_t^p|^2]\\
\nonumber
& \leq 4 E[|V_0^n-V_0^p|^2 ]+4 E[\sup_{t\in[0,T]}|\int_0^t( f(u, V_u^n, Z_u^n)- f(u, V_u^p, Z_u^p)) du |^2]\\
\nonumber
&  + 4E[\sup_{t\in[0,T]}|\int_0^t ( Z_{u}^n - Z_{u}^p)'dM_u|^2]  + 4E[\sup_{t\in[0,T]}|V_t^n - V_t^p|^2]\\
\nonumber
& \leq 8E[\sup_{t\in[0,T]}|V_t^n - V_t^p|^2]+4 E[(\int_0^T| f(u, V_u^n, Z_u^n)- f(u, V_u^p, Z_u^p)| du )^2]\\
\nonumber
&+ 16E[|\int_0^T ( Z_{u}^n - Z_{u}^p)'dM_u|^2]\\
& \leq 8E[\sup_{t\in[0,T]}|V_t^n - V_t^p|^2]+8c^2T E[\int_0^T| V_u^n- V_u^p|^2 du ]\\
&+(16+8c^2T)E[\int_0^T\| Z_{u}^n - Z_{u}^p)\|^2_{X_u}d u].
\end{align*}
Therefore by \eqref{cauchyV} and \eqref{v_z_0}:
\begin{equation*}
E\left[\sup_{0\leq t\leq T}|K_t^n -K_t^p|^2 \right] \rightarrow 0, \text{ as } n,p \rightarrow \infty.
\end{equation*}
Hence, $\{K^n\}_{n \in \mathbb{N}}$ is a Cauchy sequence which converges uniformly to some limit $K$ in mean square. Since $\{V^n\}_{n \in \mathbb{N}}$ and $\{Z^n\}_{n \in \mathbb{N}}$ are Cauchy sequences which converge to $V$ and $Z$, we know $V, Z, K$ satisfy i). Moreover, $K$ is continuous and increasing. Condition ii) follows from the proof of Lemma \ref{V_G}. Next we prove the remaining part of Condition iii). We know $(V^n,K^n)$ converges uniformly in $t$ to $(V,K)$ in probability. Therefore the measure $dK^n$ converges to $dK$ weakly in probability. It follows that:
\[\int_0^T (V_t^n-G_t)dK_t^n \rightarrow \int_0^T (V_t-G_t) dK_t\]
in probability. Using Lemma \ref{V_G} we deduce that:
\[\int_0^T (V_t - G_t) dK_t \geq 0, \text{ a.s.}\]
However,
\[\int_0^T (V^n_t - G_t) dK^n_t = n \int_0^T (V^n_t - G_t) (V^n_t - G_t)^- dt \leq 0, \quad n \in \mathbb{N}.\]
Hence,
\[\int_0^T (V_t-G_t) dK_t =0,\text{  a.s.}\]
Finally, we conclude that $(V, Z,K)$ solves the RBSDE.~~~~~ $\mbox{} \hfill \Box$
\section{Application to American Options}
\subsection{The Stochastic Discount Function (SDF)}
\indent As in \cite{RE1} and \cite{RE2}, we give the following definition:
\begin{definition}
A stochastic discount process is an adapted stochastic process $\pi =\{\pi_t, t\geq 0 \}$ such that for any asset price process $\{\mathcal{A}_t,t\geq 0\}$,
\[\pi_t\mathcal{A}_t=E[\pi_s\mathcal{A}_s|\mathcal{F}_t].\]
Here, $E$ is expectation with respect to the real world probability $P$.
\end{definition}
We suppose the stochastic discount function is modeled as follows:
\[\pi_t=\exp \left[ -\int_0^t X'_{u-}C_udX_u -\int_0^t D'_uX_udu \right],\]
where $C_u$ is an $N\times N$ matrix and $D_u$ is a vector in $\mathbb{R}^N$ for each $u\geq 0$.\\
The following lemma is Theorem 3.1 in \cite{RE1}.
\begin{lemma}\label{lemma_pi}
\begin{equation*}
d\pi_t = \pi_t [-D'_tX_t+X'_t\sigma_tA_tX_t] dt + \pi_{t_{-}} X'_{t_{-}}\sigma_{t_{-}}dM_t,
\end{equation*}
where $\sigma_t = (\sigma_t^{ij})$ is the $N \times N$ matrix with:
\begin{equation*}
\sigma_t^{ij} = \exp (C_t^{ii}- C_t^{ij}) -1, \quad \quad 1 \leq i,j\leq N.
\end{equation*}
\end{lemma}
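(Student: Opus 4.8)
The plan is to recognize that $\pi_t = e^{Y_t}$, where $Y_t = -\int_0^t X'_{u-}C_u\,dX_u - \int_0^t D'_u X_u\,du$ is a semimartingale, and to apply the jump form of It\^o's formula to the smooth function $F=\exp$. First I would substitute the chain dynamics $dX_u = A_u X_u\,du + dM_u$ into $dY_u$, separating the absolutely continuous drift terms from the martingale term $-\int_0^t X'_{u-}C_u\,dM_u$. The crucial structural observation is that $M$, being the martingale part of a finite-state chain with bounded rates, is of finite variation and purely discontinuous, so it has no continuous martingale component; consequently the continuous quadratic-variation term $[Y]^c$ vanishes and the second-order term in It\^o's formula drops out. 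This reduces the expansion to $\pi_t - \pi_0 = \int_0^t \pi_{u-}\,dY_u + \sum_{0<u\le t}(\Delta\pi_u - \pi_{u-}\Delta Y_u)$.

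Second, I would compute the jumps explicitly. Since only the $dX$-integral contributes jumps, $\Delta Y_u = -X'_{u-}C_u\Delta X_u$, and when $X$ jumps from $e_i$ to $e_j$ this equals $C_u^{ii}-C_u^{ij}$. Hence $\Delta\pi_u = \pi_{u-}(e^{\Delta Y_u}-1) = \pi_{u-}\sigma_u^{ij}$, which is exactly where the matrix $\sigma$ enters. The key algebraic identity is that, because the diagonal entries $\sigma_u^{ii}$ vanish, one has $\sigma_u^{ij} = X'_{u-}\sigma_u X_u = X'_{u-}\sigma_u\Delta X_u$ (using $X'_{u-}\sigma_u X_{u-}=0$), so the jump sum becomes $\sum_{0<u\le t}\pi_{u-}X'_{u-}\sigma_u\Delta X_u$, a sum over the jumps of $X$, equivalently of $M$ since $\Delta X_u = \Delta M_u$.

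Third, I would convert this jump sum, and the analogous sum $\sum_{0<u\le t}\pi_{u-}\Delta Y_u$, into stochastic integrals. Because $M$ is of finite variation with continuous part $-\int_0^\cdot A_u X_u\,du$, the Stieltjes decomposition yields $\sum_{0<u\le t}H_{u-}\Delta M_u = \int_0^t H_{u-}\,dM_u + \int_0^t H_{u-}A_u X_u\,du$ for predictable $H$. Applying this with $H=\pi_{u-}X'_{u-}\sigma_u$ and with $H=\pi_{u-}X'_{u-}C_u$ produces in each case a $dM$-integral together with a compensating $du$-integral. Assembling the three pieces, the $C_u$-weighted integrals against $dM$ coming from $\int_0^t\pi_{u-}\,dY_u$ and from $-\sum_{0<u\le t}\pi_{u-}\Delta Y_u$ cancel, leaving exactly $\int_0^t \pi_{u-}X'_{u-}\sigma_{u-}\,dM_u$; likewise the $C_u$-weighted $du$-integrals cancel, leaving $\int_0^t \pi_u(X'_u\sigma_u A_u X_u - D'_u X_u)\,du$, which is the claimed drift.

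I expect the main obstacle to be the bookkeeping in the third step: correctly accounting for the compensator term $\int_0^t H_{u-}A_u X_u\,du$ that arises when a jump sum is rewritten as a stochastic integral against $M$ (a consequence of $M$ having a nonzero continuous finite-variation part even though its continuous martingale part is zero), and verifying that every $C_u$-dependent contribution cancels so that only $\sigma$ survives. Care is also needed with the predictable-versus-right-limit values of $\sigma$ at jump times, which is harmless provided $C$ is taken predictable, yielding the $\sigma_{t-}$ appearing in the martingale term.
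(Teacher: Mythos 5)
Your proposal is correct, but note that the paper does not prove this lemma at all: it simply cites it as Theorem 3.1 of van der Hoek and Elliott \cite{RE1}, so there is no internal proof to compare against. Your derivation is a valid self-contained argument. The key points all check out: since $M$ is a purely discontinuous finite-variation martingale, $Y_t=-\int_0^t X'_{u-}C_u\,dX_u-\int_0^t D'_uX_u\,du$ is of finite variation, the change-of-variables formula has no second-order term, and $\pi_t-\pi_0=\int_0^t\pi_{u-}\,dY_u+\sum_{0<u\le t}(\Delta\pi_u-\pi_{u-}\Delta Y_u)$ is the right starting point. On a jump $e_i\to e_j$ one has $\Delta Y_u=C_u^{ii}-C_u^{ij}$, hence $\Delta\pi_u=\pi_{u-}\sigma_u^{ij}$, and the identity $\sigma_u^{ij}=X'_{u-}\sigma_u\Delta X_u$ is exactly what the vanishing of the diagonal entries $\sigma_u^{ii}=\exp(0)-1=0$ buys you (the paper itself uses this fact later when computing $\sum\Delta\pi_u\Delta V_u$). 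The bookkeeping in your third step is also right: writing $\sum_{0<u\le t}H_{u-}\Delta M_u=\int_0^tH_{u-}\,dM_u+\int_0^tH_{u-}A_uX_u\,du$ for the two choices of $H$, the $C_u$-weighted $dM_u$-integrals cancel between $\int\pi_{u-}\,dY_u$ and the jump-correction sum, the $C_u$-weighted $du$-integrals cancel likewise, and what survives is precisely $\int_0^t\pi_u(X'_u\sigma_uA_uX_u-D'_uX_u)\,du+\int_0^t\pi_{u-}X'_{u-}\sigma_{u-}\,dM_u$. Your closing caveat about $\sigma_u$ versus $\sigma_{u-}$ is the only loose end, and it is harmless under the implicit assumption that $C$ is predictable (or continuous), as you say.
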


Denote by $\Gamma$, the matrix whose components are:
\begin{align*}
\Gamma_t^{ii}&=A_t^{ii}-D_t^{i} \text{ and } \\
\Gamma_{t}^{ij}&=A_{t}^{ij}\exp (C_t^{jj}-C_t^{ji}) \text{ if } i \neq j,
\end{align*}
\subsection{The Market}
\indent We consider a market consisting of $n$ stocks with price process $S^j = \{S^j_t, t \in [0,T]\}$, $j=1,2,\cdots,n$ and a bond with price $B = \{B_t, t \in [0,T] \}$, where  $T <\infty$ will be the maturity time.
Suppose each stock $S^j$ pays, at any time $t\in [0,T]$, a dividend denoted by $\mathcal{D}^j_ t$ and for a vector function $\delta_{j,t} \in \mathbb{R}^N$, it has the form $\mathcal{D}^j_ t = \delta'_{j,t} X_t$. The stock price is the discounted value of all future dividends. It is shown \cite{RE1} that, for any $t \in [0,T]$, $S_t^j$ can be written in the form $S^j_t =s'_{j,t}X_t$  where $s_{j,t} \in \mathbb{R}^N$ is a function satisfying the vector ordinary differential equation:
\begin{equation}\label{stock_ode}
\frac{ds_{j,t}}{dt} + \Gamma'_t s_{j,t} = - \delta_{j,t} ~~ \text{ and } ~ s_{j,t} \rightarrow 0 \text{ as } t \rightarrow 0.
\end{equation}
Note, for each $j=1,2,\cdots,n$, the vector function $s_{j,t} \in \mathbb{R}^N$ is the solution of the ordinary differential equation \eqref{stock_ode}, hence its $i$-th component $s_{j,t}^i$ is continuous on the domain $[0, + \infty)$. Therefore, on the interval $[0,T]$, for each $i$, $s_{j,t}^i$ is bounded. Moreover, we suppose stock prices are strictly positive, hence $s_{j,t}^i$ is strictly positive for each $i$ and $j$. Therefore there is $c_2 >0$ and $c_3 >0$ such that:
\begin{equation} \label{sbound}
c_2 \leq s_{j,t}^i \leq c_3 \quad \text{ for any } i=1,\cdots,N;~j=1,\cdots,n.
\end{equation}
Lemma 6.1 in \cite{RE1} gives the dynamics of the stock prices $S^j$ as:
\begin{lemma}\label{stockS}
\begin{equation*}
S^j_t = S^j_0 + \int_0^t  ((A'_u -\Gamma'_u) s_{j,u})'X_u du - \int_0^t  \delta'_{j,u} X_u  du + \int_0^t  s'_{j,u}dM_u ,
\end{equation*}
$j=1,\cdots,n$ and $t \in [0,T]$.
\end{lemma}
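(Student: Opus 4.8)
The plan is to apply the product rule for semimartingales to the representation $S^j_t = s'_{j,t} X_t$, exploiting the fact that $s_{j,t}$ is deterministic and of bounded variation while $X$ carries all the randomness. First I would record the two ingredients. From \eqref{semimartingale} the chain satisfies $dX_u = A_u X_u \, du + dM_u$, and from the defining equation \eqref{stock_ode} the deterministic vector $s_{j,t}$ is continuously differentiable with $\frac{ds_{j,t}}{dt} = -\Gamma'_t s_{j,t} - \delta_{j,t}$. In particular $s_{j,\cdot}$ has continuous paths of finite variation, so its quadratic covariation with the semimartingale $X$ vanishes; working componentwise in $s'_{j,t} X_t = \sum_i s^i_{j,t} X^i_t$, each summand is the product of a continuous finite-variation scalar and a scalar semimartingale, so no bracket term survives.

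Integration by parts then gives
\begin{equation*}
S^j_t = S^j_0 + \int_0^t \left(\frac{ds_{j,u}}{du}\right)' X_u \, du + \int_0^t s'_{j,u}\, dX_u .
\end{equation*}
Substituting the ODE into the first integral and the dynamics of $X$ into the second, the drift becomes $\left(-\Gamma'_u s_{j,u} - \delta_{j,u}\right)' X_u + s'_{j,u} A_u X_u$, while the martingale part is exactly $\int_0^t s'_{j,u}\, dM_u$, matching the last term of the claim.

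It then remains only to rearrange the drift into the stated form. Using $(\Gamma'_u s_{j,u})' = s'_{j,u}\Gamma_u$ and collecting the two $X_u$-drift contributions yields $s'_{j,u}(A_u - \Gamma_u) X_u - \delta'_{j,u} X_u$; since $s'_{j,u}(A_u - \Gamma_u) = ((A'_u - \Gamma'_u) s_{j,u})'$, this reproduces precisely the first two integrals in the lemma. The only step demanding attention is this bookkeeping of transposes, but there is no genuine analytic obstacle here: once the covariation term is seen to vanish, the entire result is the product rule combined with the ODE and a direct substitution, so the proof is essentially a short computation rather than an argument with a hard core.
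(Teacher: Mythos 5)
Your derivation is correct. The paper itself gives no proof of this statement; it simply quotes it as Lemma 6.1 of van der Hoek and Elliott \cite{RE1}, so there is no in-text argument to compare against. Your route --- integration by parts applied to $S^j_t = s'_{j,t}X_t$, with the bracket term vanishing because $s_{j,\cdot}$ is continuous and of finite variation, followed by substitution of the ODE \eqref{stock_ode} and of $dX_u = A_uX_udu + dM_u$ --- is the standard derivation and is exactly what the cited result rests on. Two cosmetic points you glossed over, both harmless: the product rule formally produces $\int_0^t s'_{j,u-}\,dX_u$ and $\int_0^t (ds_{j,u})'X_{u-}$, but continuity of $s_{j,\cdot}$ removes the first left limit, and $X_{u-}=X_u$ off a countable (hence Lebesgue-null) set takes care of the second since that integrator is $du$. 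The transpose bookkeeping $s'_{j,u}(A_u-\Gamma_u) = ((A'_u-\Gamma'_u)s_{j,u})'$ is also right.
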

Let $r_t \in \mathbb{R}$ be the interest rate at any time $t \in [0,T]$, so the bond price has the dynamics:
\begin{equation*}
dB_t = r_t B_t dt,
\end{equation*}
It is shown in \cite{RE1} that:
\begin{lemma}
For any $t \in [0,T]$,
\begin{equation*}
r_t =  D'_t X_t - X'_t\sigma_t A_t X_t.
\end{equation*}
\end{lemma}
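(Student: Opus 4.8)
The plan is to exploit the defining martingale property of the stochastic discount function, applied to the bond. Since the bond $B$ is a traded asset paying no dividends, its price process qualifies as an asset price process $\{\mathcal{A}_t\}$ in the sense of the definition of a stochastic discount process. Hence $\pi_t B_t = E[\pi_s B_s \mid \mathcal{F}_t]$ for all $s \geq t$; that is, $\{\pi_t B_t,\, t \in [0,T]\}$ is a martingale. The whole argument then consists in computing the semimartingale dynamics of this product and reading off the condition under which it carries no drift.

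First I would apply the product rule for semimartingales to $\pi_t B_t$. Because $B$ satisfies $dB_t = r_t B_t\, dt$, it is continuous and of finite variation, so $B_{t-} = B_t$ and the quadratic covariation of $B$ with the jump martingale part of $\pi$ vanishes, $[\pi, B] \equiv 0$. Consequently $d(\pi_t B_t) = \pi_t\, dB_t + B_t\, d\pi_t = \pi_t r_t B_t\, dt + B_t\, d\pi_t$. Substituting the expression for $d\pi_t$ from Lemma \ref{lemma_pi} and collecting the absolutely continuous terms separately from the stochastic integral against $M$ gives
\[ d(\pi_t B_t) = \pi_t B_t \bigl( r_t - D'_t X_t + X'_t \sigma_t A_t X_t \bigr)\, dt + B_t \pi_{t-} X'_{t-} \sigma_{t-}\, dM_t. \]
The last term, being a stochastic integral against the martingale $M$, is a local martingale.

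Finally, since $\{\pi_t B_t\}$ is a (true) martingale by the definition of $\pi$, the uniqueness of the decomposition of a special semimartingale forces its continuous finite-variation part to vanish identically; thus the $dt$-coefficient above must equal zero, $dt\times P$-almost everywhere. As both bond and discount prices are strictly positive we have $\pi_t B_t > 0$ and may divide it out, obtaining $r_t - D'_t X_t + X'_t \sigma_t A_t X_t = 0$, which is precisely the asserted formula. I expect the main obstacle to be the rigorous justification of this last step: one must argue carefully that a martingale cannot carry a nontrivial continuous drift (equivalently, separate the finite-variation and martingale parts and invoke uniqueness of the semimartingale decomposition), and confirm along the way that $[\pi, B] \equiv 0$, so that no additional drift is concealed in the covariation term.
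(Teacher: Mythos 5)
Your argument is correct. The paper itself offers no proof of this lemma --- it simply cites van der Hoek and Elliott \cite{RE1} --- but your derivation is exactly the standard one underlying that reference and is consistent with how the paper uses the result: immediately after the lemma the authors rewrite $d\pi_t = -\pi_t r_t\,dt + \pi_{t-}X'_{t-}\sigma_{t-}\,dM_t$, which is precisely the statement that the drift of $\pi$ equals $-r_t\pi_t$, and your computation of $d(\pi_t B_t)$ (using that $B$ is continuous and of finite variation, so $[\pi,B]\equiv 0$, and that the SDF property makes $\pi B$ a martingale) establishes exactly that. The two points you flag as needing care --- uniqueness of the special semimartingale decomposition to kill the drift, and the vanishing of the covariation term --- are handled correctly; the only cosmetic remark is that the drift identity holds a priori $dt\times P$-a.e., which is all the lemma needs and which upgrades to every $t$ by right-continuity of $X$.
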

Hence, the dynamics of the stochastic discount function $\pi$ in Lemma \ref{lemma_pi} becomes:
\begin{equation}\label{pi}
d \pi_t = - \pi_t r_t dt + \pi_{t_{-}} X'_{t_{-}}\sigma_{t_{-}} dM_t ~~ \text{for any}~ t \in [0,T].
\end{equation}
It is known that the market in the presence of a positive discount factor has no arbitrage opportunity.
\subsection{The Self-financing Super-hedging Strategy}
\indent We state the following definitions:
\begin{definition}[Self-financing strategy]
Let $V$ be the portfolio value, $h^0_t \in \mathbb{R}$ the number of bonds $B$ held at time $t$ and let $h_t = (h_t^1,\cdots,h_t^n)'$ with $h^j_t \in \mathbb{R}$ is the number of stocks $S^j$ held at time $t$, $j=1, \cdots,n$. Then
\begin{equation}\label{portfolio}
V_t = h_t^0B_t +\sum\limits_{j=1}^n h_t^jS_t^j, ~~ t \in [0,T].
\end{equation}
Let $K$ be the cumulative consumption process with $K_0= 0$. Then, a self-financing strategy,  is a vector process $(V, h, K)$ such that:
\begin{equation}\label{self_financing}
dV_t = h_t^0 dB_t + \sum_{j=1}^n ( h_t^j dS^j_t + h_t^j d\mathcal{D}^j_t) - dK_t, ~~ t \in [0,T].
\end{equation}
\end{definition}
For American options, the portfolio value should dominate the payoff at any time $t$ to cover any exercise action. This leads to the following definition:
\begin{definition}
Given a payoff process $\{G_t\}$, a self-financing strategy is called a superhedging strategy if:
\[V_t \geq G_t, \quad   t \in [0,T) ~\text{ and  }~~ V_T = G_T.\]
\end{definition}

We shall discuss whether we can find such a strategy. The theory of RBSDEs, driven by Brownian motions, ensures the existence of such strategy in the classical Black-Scholes model. We shall show a similar result for the Markov chain model. \\
\indent It follows from Lemma \eqref{stockS}, \eqref{portfolio} and \eqref{self_financing} that:
\begin{align}\label{dV}
\nonumber
dV_t &  = h_t^0 r_t B_t dt + \sum_{j=1}^n h_t^j X'_t (A'_t - \Gamma'_t)s_{j,t} dt + \sum_{j=1}^n h_{t-}^j s'_{j,t} dM_t - dK_t \\
 \nonumber
 & = r_t V_t dt - r_t (\sum_{j=1}^n h_t^j X_t's_{j,t})dt+ \sum_{j=1}^n h_t^j X'_t (A'_t - \Gamma'_t)s_{j,t} dt + \sum_{j=1}^n h_{t-}^j s'_{j,t} dM_t - dK_t \\
\nonumber
 & = r_tV_t dt + \sum_{j=1}^n h_t^j X_t' (-r_t + (A'_t -\Gamma'_t))s_{j,t}dt + \sum_{j=1}^n h^j_{t-}s'_{j,t} dM_t -dK_t \\
& = r_tV_t dt +  X_t' (-r_t + (A'_t -\Gamma'_t))(\sum_{j=1}^n h_t^j s_{j,t})dt + (\sum_{j=1}^n h^j_{t-}s'_{j,t}) dM_t -dK_t.
\end{align}

Now, consider the
function $f: \mathbb{R} \times \mathbb{R} \times \mathbb{R}^N \rightarrow \mathbb{R}$ such that
\begin{equation}\label{f}
f(t, v, z) = -r_t v - X'_t( -r_t + (A'_t-\Gamma'_t)) z,
\end{equation}
and the RBSDE:
\begin{equation}\label{R}\begin{cases}
 \text{i})& V_t  = G_T + \int_t^T f(u, V_u , Z_u) du + K_T - K_t - \int_t^T Z_{u-}'dM_u ;\\
 \text{ii})& V_t  \geq G_t;\\
 \text{iii})& \{K_t, t \in [0,T]\} \text{ is continuous and increasing},~ K_0=0 \\
   & ~ \text{ and } \int_0^T (V_u - G_u) dK_u = 0.
\end{cases} \end{equation}
\begin{prop}[Lipschitz Condition.] \label{lipf}
Let $f$ be given by \eqref{f}. We suppose that there is a constant $c_1 >0$ such that:
\begin{equation}\label{normGA}
| (A_t - \Gamma_t )X_t|_N \leq c_1,
\end{equation}
for any $t \in [0,T]$.
Then, for $z_1, z_2 \in \mathbb{R}^N$ and for $v_1, v_2 \in \mathbb{R}$, there is a constant $c_6>0$ such that:
\[|f(t, v_1, z_1 ) - f(t, v_2, z_2)| \leq c_6 (\|z_1 - z_2\|_{X_t}+ |v_1 - v_2|),\]
for any $t \in [0,T]$.
\end{prop}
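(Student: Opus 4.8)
The plan is to exploit that $f$ is affine in $(v,z)$, so that the difference $f(t,v_1,z_1)-f(t,v_2,z_2)$ equals exactly $-r_t(v_1-v_2) - X_t'\big(-r_tI + A_t'-\Gamma_t'\big)(z_1-z_2)$. By the triangle inequality the estimate splits into a $v$-term and a $z$-term, and the only real work is to control each by the correct norm.

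The $v$-term $|r_t|\,|v_1-v_2|$ is handled by showing $r_t$ is bounded uniformly on $[0,T]$: from the formula $r_t = D_t'X_t - X_t'\sigma_tA_tX_t$, together with the boundedness of the entries of $A$ and of the model coefficients $C,D$ (hence of $\sigma_t^{ij}=\exp(C_t^{ii}-C_t^{ij})-1$), one gets $|r_t|\le r^*$ for some constant $r^*$. This contributes $r^*|v_1-v_2|$.

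The $z$-term is the heart of the matter. Writing $w_t := (-r_tI + A_t-\Gamma_t)X_t \in \mathbb{R}^N$, the term equals $|w_t'(z_1-z_2)|$, and the difficulty is that it must be bounded by the \emph{semi}-norm $\|z_1-z_2\|_{X_t}$, not by the Euclidean norm; since Lemma \ref{normbound} only gives $\|\cdot\|_{X_t}\le\sqrt{3m}\,|\cdot|_N$ (the wrong direction), a naive estimate does not suffice. The key is that $\|\cdot\|_{X_t}$ is generated by the symmetric positive semidefinite matrix $\Psi_t$ and that $w_t$ lies in the range of $\Psi_t$. Granting this, I would write, for $\zeta=z_1-z_2$, $w_t'\zeta = (\Psi_t^\dagger w_t)'\Psi_t\zeta = \lel \Psi_t^\dagger w_t, \zeta\rir_{X_t}$, using $\Psi_t\Psi_t^\dagger w_t = w_t$ and the symmetry of $\Psi_t^\dagger$; Cauchy--Schwarz for the semi-inner product then gives $|w_t'\zeta|\le \|\Psi_t^\dagger w_t\|_{X_t}\,\|\zeta\|_{X_t}$. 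Applying Lemma \ref{normbound} and the definition of $\|\cdot\|_{N\times N}$ yields $\|\Psi_t^\dagger w_t\|_{X_t}\le \sqrt{3m}\,|\Psi_t^\dagger w_t|_N \le \sqrt{3m}\,\|\Psi_t^\dagger\|_{N\times N}\,|w_t|_N$, and finally $|w_t|_N \le |r_t|+|(A_t-\Gamma_t)X_t|_N \le r^* + c_1$ by \eqref{normGA} and $|X_t|_N=1$. Taking $c_6 = \max\{r^*,\ \sqrt{3m}\,\sup_t\|\Psi_t^\dagger\|_{N\times N}(r^*+c_1)\}$ then gives the claim.

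The main obstacle is therefore the structural fact $w_t\in\mathrm{range}(\Psi_t)$, equivalently $w_t\perp\ker\Psi_t$, without which the pseudoinverse identity above is false. I would verify it by computing $\Psi_t$ explicitly on the event $\{X_t=e_i\}$: one finds $\ker\Psi_t$ is spanned by $(1,\dots,1)'$ together with the coordinate vectors $e_l$ for the states $l$ with zero transition rate $A_t^{li}=0$. The orthogonality $w_t\perp(1,\dots,1)'$ reduces to the identity $r_t = -\sum_k\Gamma_t^{ki}$, which follows from the definitions of $r_t$, $\Gamma_t$, $\sigma_t$ and the column-sum-zero property $\sum_k A_t^{ki}=0$ of the generator; and $w_t\perp e_l$ for each zero-rate $l$ holds since the $l$-th component of $w_t$ equals $-A_t^{li}\sigma_t^{il}$, which vanishes when $A_t^{li}=0$. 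Equivalently, and more directly, the same coordinate computation yields $\|\zeta\|_{X_t}^2=\sum_{l\ne i}A_t^{li}(\zeta^l-\zeta^i)^2$ and $w_t'\zeta=-\sum_{l\ne i}A_t^{li}\sigma_t^{il}(\zeta^l-\zeta^i)$, so a single weighted Cauchy--Schwarz bounds the $z$-term by $\big(\sup_{t,i}\sum_{l\ne i}A_t^{li}(\sigma_t^{il})^2\big)^{1/2}\|\zeta\|_{X_t}$ without ever invoking $\Psi_t^\dagger$. I expect verifying this range/kernel structure to be the delicate part; the remaining estimates are routine.
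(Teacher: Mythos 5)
Your proposal is correct, and it takes a genuinely different --- and in fact sounder --- route than the paper. The paper's proof bounds $|X'_t(-r_t+(A'_t-\Gamma'_t))(z_1-z_2)|$ by $|(-r_t+(A_t-\Gamma_t))X_t|_N\,|z_1-z_2|_N$ and then asserts, citing Lemma \ref{normbound}, that $|z_1-z_2|_N\leq\sqrt{3\beta}\,\|z_1-z_2\|_{X_t}$. As you observe, Lemma \ref{normbound} gives the \emph{opposite} inequality, and the asserted one is false in general because $\|\cdot\|_{X_t}$ is a degenerate semi-norm (e.g.\ $\|(1,\dots,1)'\|_{X_t}=0$). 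So the paper's argument, as written, has a gap precisely at the point you flagged, and your structural observation is what is needed to close it: the vector $w_t=(-r_tI+A_t-\Gamma_t)X_t$ is orthogonal to $\ker\Psi_t$, which you verify via $\sum_k w_t^k=0$ (equivalent to $r_t=D_t^i-\sum_{k\neq i}\sigma_t^{ik}A_t^{ki}$ on $\{X_t=e_i\}$, i.e.\ the stated formula for $r_t$) and $w_t^l=-A_t^{li}\sigma_t^{il}=0$ whenever $A_t^{li}=0$. Your direct coordinate version is the cleanest: on $\{X_t=e_i\}$ one has $\|\zeta\|^2_{X_t}=\sum_{l\neq i}A_t^{li}(\zeta^l-\zeta^i)^2$ and $w_t'\zeta=-\sum_{l\neq i}A_t^{li}\sigma_t^{il}(\zeta^l-\zeta^i)$, so weighted Cauchy--Schwarz gives the constant $\bigl(\sup_{t,i}\sum_{l\neq i}A_t^{li}(\sigma_t^{il})^2\bigr)^{1/2}$, which is finite under the paper's standing assumptions that $A$ and $C$ are bounded; this version does not even need hypothesis \eqref{normGA} or any control on $\Psi_t^{\dagger}$. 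The pseudoinverse variant also works (using symmetry of $\Psi_t$ and $\Psi_t^{\dagger}$ and $\Psi_t\Psi_t^{\dagger}w_t=w_t$), but it additionally requires $\sup_t\|\Psi_t^{\dagger}\|_{N\times N}<\infty$, a quantity the paper only constrains through Assumption \ref{ass0}; so the direct route is preferable. In short: your proof is a correct repair of an argument that the paper itself carries out incorrectly.
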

\begin{proof}
The interest rate $r_t$ is, in practice, positive and bounded, so there is $c_4 >0$ such that
\begin{equation}\label{normr}
r_t \leq c_4.
\end{equation}
From \eqref{normr} and \eqref{normGA}, there is a constant $c_5 >0$ such that
\begin{equation} \label{normS}|(-r_t + (A_t - \Gamma_t))X_t|_N \leq c_5.\end{equation}
Now, for $z_1,z_2 \in \mathbb{R}^N$ and $v_1, v_2 \in \mathbb{R}^N$, we have
\begin{align*}
& |f(t, v_1, z_1) - f(t, v_2, z_2)| \\
&= |(v_1-v_2)r_t + X'_t(-r_t + (A'_t - \Gamma'_t))(z_1-z_2)| \\
& \leq |v_1-v_2|r_t + |(-r_t + (A_t - \Gamma_t))X_t |_N \times |z_1-z_2|_N.
\end{align*}
From Lemma \ref{normbound}, there is a constant $\beta>0$ such that $|z_2 - z_1 |_N \leq \sqrt{3\beta} \|z_2 - z_1 \|_{X_t}$. Hence, with \eqref{normS}, there is a constant $c_6$, such that
\[|f(t, v_1, z_1) - f(t, v_2, z_2)| \leq c_6 ( \|z_2 - z_1 \|_{X_t} + |v_1-v_2|).\]
\end{proof}
Therefore, from previous section RBSDE \eqref{R} has a unique solution $(V,Z,$ $K)$ such that $V \in L^2_{\mathcal{F}}(0,T;\mathbb{R})$, $K_T\in L^2(\mathcal{F}_T)$ and $Z \in P^2_{\mathcal{F}}(0,T;\mathbb{R}^N)$
Now, if $(V,Z,K)$ is the unique solution to RBSDE \eqref{R} and if there exists a non-zero vector $h=(h_t^1, \cdots, h_t^n)'$ such that $\sum\limits_{j=1}^n h_t^j s_{j,t} = Z_t$, then  $(V,h,K)$ solves \eqref{dV}. The equation $\sum\limits_{j=1}^n h_t^j s_{j,t} = Z_t$ has a solution $h_t$, $t \in [0,T]$ if $Z_t$ belongs to the linear subspace of $\mathbb{R}^N$ spanned by the vectors $s_{1,t},\cdots,s_{n,t}$, which holds only if $n \geq N$. Moreover, the decomposition of $Z_t$ into a linear combination of $s_{j,t}$'s is unique if $s_{j,t}$'s are linearly independent, in which case, $n$ cannot be greater than $N$, hence $n=N$. This leads to the following proposition:
\begin{prop}\label{complete}
Suppose $f$, in equation \eqref{f} and Proposition \ref{lipf}, satisfies $c_6\|\Psi^{\dagger}_t\|_{N\times N} \sqrt{6m} < 1$. A unique super hedging strategy $(V,h,K)$ exists for the American option with payoff $G$ only if the market is composed by $N$ linearly independent stocks.
\end{prop}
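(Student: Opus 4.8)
The plan is to turn the existence and uniqueness of a super-hedging strategy into the solvability and the unique solvability of a pointwise linear system in $h_t$, and then read off the market structure by elementary linear algebra.

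First I would invoke the machinery already in place. By Proposition \ref{lipf} the driver $f$ of \eqref{f} is Lipschitz; it satisfies $f(t,0,0)=0$, so \eqref{Con_f} holds trivially; the payoff is assumed to obey \eqref{Con_g} with $\xi=G_T\in L^2(\mathcal{F}_T)$. Under the extra hypothesis $c_6\|\Psi_t^{\dagger}\|_{N\times N}\sqrt{6m}<1$, all the conditions of the existence-and-uniqueness theorem of Section \ref{section4} are met, so \eqref{R} has a unique solution $(V,Z,K)$. In particular the complementarity condition iii) fixes $K$, hence $V$, and determines $Z$ uniquely $d\langle X,X\rangle_t\times\mathbb{P}$-a.s.

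Next I would make the identification between \eqref{dV} and \eqref{R} rigorous. Each $s_{j,\cdot}$ is the deterministic continuous solution of \eqref{stock_ode}, so $s_{j,u-}=s_{j,u}$; rewriting \eqref{dV} in backward form with terminal value $V_T=G_T$ shows that $(V,h,K)$ is a super-hedging strategy of the required minimal type exactly when, together with the obstacle and complementarity conditions, its holdings satisfy
\[\sum_{j=1}^n h_t^j s_{j,t}=Z_t,\qquad t\in[0,T],\]
the martingale integrand being determined uniquely by Lemma \ref{Z2}. Thus $V$ and $K$ are forced to be those of the RBSDE, and the sole remaining freedom in the strategy is the vector $h_t=(h_t^1,\dots,h_t^n)'$.

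Then I would extract the two linear-algebraic requirements on $s_{1,t},\dots,s_{n,t}\in\mathbb{R}^N$. For \emph{existence} of an $h_t$ solving the displayed system, $Z_t$ must lie in $\mathrm{span}\{s_{1,t},\dots,s_{n,t}\}$; since $Z_t$ ranges over $\mathbb{R}^N$ as the payoff varies, the span must be all of $\mathbb{R}^N$, which forces $n\ge N$. For \emph{uniqueness} of $h_t$ the linear map $h_t\mapsto\sum_j h_t^j s_{j,t}$ must be injective, equivalently $s_{1,t},\dots,s_{n,t}$ linearly independent, and $n$ independent vectors in $\mathbb{R}^N$ force $n\le N$. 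Combining the two inequalities yields $n=N$ with $\{s_{j,t}\}$ a basis, i.e. the market is composed of $N$ linearly independent stocks. The step I expect to be the main obstacle is the existence half $n\ge N$, which hinges on $Z_t$ exhausting $\mathbb{R}^N$ rather than happening to sit in a proper subspace. I would make this precise by showing that if $\mathrm{span}\{s_{j,t}\}$ were a proper subspace of $\mathbb{R}^N$ on a set of positive $dt\times\mathbb{P}$ measure, one could exhibit an admissible payoff whose RBSDE integrand $Z_t$ escapes that subspace, leaving no super-hedging $h$; the uniqueness half and the final count are then immediate from the linear algebra together with the RBSDE uniqueness established in the first step.
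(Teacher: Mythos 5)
Your proposal follows essentially the same route as the paper: reduce the problem to the unique solvability of the RBSDE \eqref{R} under the stated smallness condition on $c_6$, identify the hedging portfolio through the pointwise linear system $\sum_{j=1}^n h_t^j s_{j,t}=Z_t$, and conclude $n\ge N$ from solvability for arbitrary integrands and $n\le N$ from uniqueness via linear independence. Your added remark on why $Z_t$ must exhaust $\mathbb{R}^N$ (varying the payoff) only makes explicit a step the paper states without elaboration, so the two arguments coincide.
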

The condition in Proposition \ref{complete} is fulfilled by supposing that the vectors $\delta_{j,t}$'s representing the dividends are linearly independent.
\subsection{The Discounted Super-hedging Portfolio Value}
\indent Suppose $(V, h, K)$ is the unique superhedging strategy for the American option with payoff $G$. Let $\varphi_t$, $t \in [0,T]$, be the matrix whose $i$-th columns are $s_{i,t}$, $i=1,\cdots,N$. Then, from \eqref{dV}, $(V,h,K)$ satisfies:
\begin{equation}\label{dV2}
dV_t = r_tV_t dt +  X_t' (-r_t + (A'_t -\Gamma'_t))\varphi_t h_tdt + h'_{t-} \varphi'_{t-} dM_t -dK_t.
\end{equation}
 We shall write the equation for the discounted portfolio $\pi V$. Using the product rule for semimartingales, we have:
\begin{equation*}
V_t\pi_t = V_T\pi_T - \int_t^T V_{u_{-}} d\pi_u - \int_t^T \pi_{u_{-}}dV_u - \sum_{t < u \leq T} \Delta \pi_u \Delta V_u,
\end{equation*}
and we recall from Chapter 1 that $\sum\limits_{t < u \leq T} \Delta \pi_u \Delta V_u$ is the optional covariation of $\pi_t$ and $V_t$. Note again $dX_t = \Delta X_t$ and $\Delta X_t = \Delta M_t$.
We have from \eqref{pi} and \eqref{dV2} that $$\Delta \pi_t = \pi_{t_{-}} X'_{t_{-}} \sigma_{t_{-}} \Delta X_t ~~ \text{and}~~\Delta V_t = h'_{t-} \varphi'_{t-} \Delta X_t.$$
Also,
\begin{align*}
 &\pi_{u_{-}} X'_{u_{-}} \sigma_{u_{-}} \Delta X_u h'_{u-} \varphi'_{u-} \Delta X_u \\
& = \sum_{i,j}  \pi_{u_{-}}   (e'_iX_{u_{-}})  (e'_j X_u) (e'_i \sigma_{u_{-}} (e_j -e_i))h'_{u-} \varphi'_{u-}(e_j-e_i) \\
& = \sum_{i,j} \pi_{u_{-}} (e'_iX_{u_{-}}) (e'_j \Delta X_u) (e'_i \sigma_{u_{-}} (e_j -e_i))h'_{u-} \varphi'_{u-}(e_j-e_i).
\end{align*}
 Therefore, noting $\sigma_u^{ii}=0, ~ i=1,2,\cdots,N$,
\begin{align*}
&\sum_{t < u \leq T} \Delta \pi_u \Delta V_u \\
& = \sum_{i,j} \sum_{t < u \leq T} \pi_{u_{-}} (e'_iX_{u_{-}}) (e'_j \Delta X_u) (e'_i \sigma_{u_{-}} (e_j -e_i))h'_{u-} \varphi'_{u-}(e_j-e_i) \\
& = \int_t^T \sum_{i,j} \pi_{u_{-}} (e'_iX_{u_{-}}) ( e'_j (A_uX_u du + dM_u))(e'_i \sigma_{u_{-}} (e_j -e_i))h'_{u-} \varphi'_{u-}(e_j-e_i) \\
& = \int_t^T \sum_{i,j} \pi_{u_{-}} (e'_iX_{u}) e'_j(A_uX_u)(e'_i \sigma_{u} (e_j -e_i))h'_{u-} \varphi'_{u-}(e_j-e_i)  du\\
& + \int_t^T \sum_{i,j} \pi_{u_{-}} (e'_iX_{u}) (e'_jdM_u)(e'_i \sigma_{u_{-}} (e_j -e_i))h'_{u-} \varphi'_{u-}(e_j-e_i)\\
& = \int_t^T \sum_{i,j} \pi_{u} (e'_iX_{u}) A_u^{ji}\sigma_u^{ij} h'_{u} \varphi'_{u}(e_j-e_i)  du\\
& + \int_t^T \sum_{i,j} \pi_{u_{-}} (e'_iX_{u})(e'_jdM_u) \sigma_{u-}^{ij}h'_{u-} \varphi'_{u-}(e_j-e_i).
\end{align*}
Hence we derive
\begin{align*}
\pi_t V_t &= \pi_T V_T + \int_t^T V_u \pi_u r_u du - \int_t^T V_{u_{-}}\pi_{u_{-}}X'_{u_{-}}\sigma_{u_{-}}dM_u  \\
&  -  \int_t^T \pi_u V_u \ r_u du - \int_t^T \pi_u X'_u (-r_u + (A'_u - \Gamma'_u))\varphi_u h_u du \\
& - \int_t^T \pi_u (-dK_u) - \int_t^T \pi_{u_{-}} h'_{u_{-}}\varphi'_{u-} dM_u \\
& - \int_t^T \sum_{i,j} \pi_{u_{-}} (e'_iX_{u_{-}}) A_u^{ji}\sigma_u^{ij} h'_{u-} \varphi'_{u-}(e_j-e_i)  du\\
& - \int_t^T \sum_{i,j} \pi_{u_{-}} (e'_iX_{u_{-}})(e'_jdM_u) \sigma_{u-}^{ij}h'_{u-} \varphi'_{u-}(e_j-e_i).
\end{align*}\\
Collecting together the $du$ terms, and the $dM_u$ terms,
we have:
\begin{align*}
&\pi_t V_t \\
&= \pi_T V_T + \int_t^T (\pi_u X'_u (-r_u + (A'_u - \Gamma'_u))\varphi_u h_u - \pi_u\sum_{i,j} (X'_{u}e_i) A_u^{ji}\sigma_u^{ij} h'_u \varphi'_u (e_j-e_i)) du \\
& + \int_t^T \pi_u dK_u \\
& - \int_t^T ( \pi_{u_{-}} V_{u_{-}}X'_{u_{-}} \sigma_{u_{-}} + \sum_{i,j}  \pi_{u_{-}}(e'_iX_{u-}) \sigma_{u-}^{ij}h'_{u-}\varphi'_{u-}(e_j-e_i) e'_j  + \pi_{u-} h'_{u-} \varphi'_{u-}) dM_u.
\end{align*}
Now, let $\tilde{V}_t = \pi_t V_t$, $\tilde{Z}_t = \pi_t \varphi_t h_t$ and $\tilde{K}_t = \int_0^t \pi_u dK_u$.  Also, let
\begin{equation*}
H(t, z) =  X'_t (-r_t + (A'_t - \Gamma'_t))z - \sum_{i,j} (X'_{t}e_i) A_t^{ji}\sigma_t^{ij} z' (e_j-e_i)),\quad \text{and}
\end{equation*}
\begin{equation*}
I(t, z, v) =  v X'_{t_{-}} \sigma_{t_{-}} + \sum_{i,j} (e'_iX_{t-}) \sigma_{u-}^{ij}z(e_j-e_i) e'_j  +  z'.
\end{equation*}
Then, $(\tilde{V}_t, \tilde{Z}_t, \tilde{K}_t)$ solves the following equation with final condition $\pi_T G_T$:
\begin{equation}\label{RBSDEdiscount}
\begin{cases}
1) & \tilde{V}_t = \pi_TG_T + \int_t^T H(u,\tilde{Z}_u ) du  + \tilde{K}_T - \tilde{K}_t - \int_t^T I(u_{-}, \tilde{Z}_{u_{-}}, \tilde{V}_{u_{-}}) dM_u; \\
2) & \tilde{V}_t \geq \pi_t G_t.
\end{cases}
\end{equation}
Such a solution is called a super-hedging strategy for the discounted American claim.
\begin{prop}
Consider $\mathcal{S}_t^T$, the set of all stopping times $\{\tau\}$ with $t \leq \tau \leq T$. Then the solution $\tilde{V}$ of \eqref{RBSDEdiscount} is the solution to the optimal stopping time problem:
\begin{equation*}
\tilde{V}_t = ess \sup_{\tau \in \mathcal{S}_t^T}   E \left[ \int_t^{\tau} H(u, \tilde{Z}_u) du  + \pi_{\tau}G_{\tau}1_{\{\tau < T\}} +\pi_T G_T1_{\{\tau=T\}} | \mathcal{F}_t\right] .
\end{equation*}
\end{prop}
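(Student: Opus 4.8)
The plan is to establish the \emph{ess sup} identity by proving two inequalities: that $\tilde V_t$ dominates the expected reward for every admissible stopping time, and that this domination becomes an equality at the first time the value process touches the discounted obstacle.

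First I would fix an arbitrary $\tau\in\mathcal S_t^T$ and evaluate the RBSDE \eqref{RBSDEdiscount} between $t$ and $\tau$:
\[\tilde V_t=\tilde V_\tau+\int_t^\tau H(u,\tilde Z_u)\,du+(\tilde K_\tau-\tilde K_t)-\int_t^\tau I(u_-,\tilde Z_{u_-},\tilde V_{u_-})\,dM_u.\]
Because $\pi$ is bounded above and below away from $0$ on $[0,T]$ and the coefficients are bounded, the predictable integrand $I$ satisfies $E[\int_0^T\|I(u_-,\tilde Z_{u_-},\tilde V_{u_-})\|_{X_u}^2\,du]<\infty$, so by Lemma \ref{Z2} the stochastic integral is a square-integrable martingale and optional stopping kills its $\mathcal F_t$-conditional expectation. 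Taking $E[\,\cdot\mid\mathcal F_t]$ and using that $\tilde K$ is increasing (so $\tilde K_\tau-\tilde K_t\ge0$), together with the obstacle $\tilde V_\tau\ge\pi_\tau G_\tau$ on $\{\tau<T\}$ and the terminal equality $\tilde V_T=\pi_TG_T$, gives
\[\tilde V_t\ge E\Big[\int_t^\tau H(u,\tilde Z_u)\,du+\pi_\tau G_\tau 1_{\{\tau<T\}}+\pi_TG_T 1_{\{\tau=T\}}\,\Big|\,\mathcal F_t\Big].\]
Since $\tau$ is arbitrary, the left-hand side dominates the essential supremum.

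For the reverse inequality I would exhibit an optimal stopping time. Note first that the Skorokhod condition passes to the discounted system: as $\tilde K_t=\int_0^t\pi_u\,dK_u$ with $\pi>0$, the process $\tilde K$ is continuous and increasing, and $\int_0^T(\tilde V_u-\pi_uG_u)\,d\tilde K_u=\int_0^T\pi_u^2(V_u-G_u)\,dK_u=0$ by iii) of \eqref{R}. Define $\tau_t^\ast=\inf\{s\ge t:\tilde V_s=\pi_sG_s\}\wedge T$. On $[t,\tau_t^\ast)$ one has $\tilde V_s>\pi_sG_s$, so $d\tilde K$ charges no mass there and continuity of $\tilde K$ yields $\tilde K_{\tau_t^\ast}=\tilde K_t$. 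Writing the RBSDE between $t$ and $\tau_t^\ast$, the $\tilde K$-increment vanishes and the stochastic integral again contributes nothing in conditional expectation, so that, using $\tilde V_{\tau_t^\ast}=\pi_{\tau_t^\ast}G_{\tau_t^\ast}1_{\{\tau_t^\ast<T\}}+\pi_TG_T 1_{\{\tau_t^\ast=T\}}$,
\[\tilde V_t=E\Big[\int_t^{\tau_t^\ast}H(u,\tilde Z_u)\,du+\pi_{\tau_t^\ast}G_{\tau_t^\ast}1_{\{\tau_t^\ast<T\}}+\pi_TG_T 1_{\{\tau_t^\ast=T\}}\,\Big|\,\mathcal F_t\Big].\]
Combining the two displays proves the stated optimal-stopping representation.

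The hard part will be justifying $\tilde V_{\tau_t^\ast}=\pi_{\tau_t^\ast}G_{\tau_t^\ast}$ on $\{\tau_t^\ast<T\}$, i.e.\ that the value actually meets the obstacle at the hitting time. In the classical continuous-obstacle setting this is immediate from continuity, but here $\tilde V$ is only RCLL and $\pi G$ may jump, so I would argue using right-continuity of both $\tilde V$ and $\pi G$ that the coincidence set is right-closed, or approximate $\tau_t^\ast$ from above by a decreasing sequence of stopping times and pass to the limit with the $L^2$-bounds of Lemma \ref{limit_V}. A secondary technical check is the integrability ensuring $\int I\,dM$ is a true martingale after stopping, which follows from the boundedness of $\pi$ and the coefficients together with $\tilde V\in L^2_{\mathcal F}(0,T;\mathbb R)$ and $\tilde Z\in P^2_{\mathcal F}(0,T;\mathbb R^N)$.
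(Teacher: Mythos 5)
Your proposal is correct and follows essentially the same route as the paper: the upper bound by conditioning the stopped RBSDE for an arbitrary $\tau$ and discarding the nonnegative $\tilde K$-increment, and the lower bound via the first hitting time of the obstacle together with the Skorokhod condition forcing $\tilde K$ to be flat before that time. The only difference is that you explicitly flag two technical points the paper passes over in silence (the true-martingale property of the stopped stochastic integral, and the attainment $\tilde V_{\tau_t^\ast}=\pi_{\tau_t^\ast}G_{\tau_t^\ast}$ at the RCLL hitting time), which is a welcome refinement rather than a change of method.
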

\begin{proof}
Let $\tau \in \mathcal{S}_t^T$. Take the conditional expectation from time $t$ to time $\tau$ in \eqref{RBSDEdiscount}:
\begin{align*}
\tilde{V}_t &= E \left[ \int_t^{\tau} H(u, \tilde{Z}_u) du + \tilde{V}_{\tau} + \tilde{K}_{\tau}   - \tilde{K}_t| \mathcal{F}_t \right] .
\end{align*}
Since $\tilde{K}_{\tau} - \tilde{K}_t \geq 0$ and $$ \tilde{V}_{\tau} \geq  \pi_{\tau}G_{\tau}1_{\{\tau < T\}} + \pi_T G_T1_{\{\tau=T\}},$$
\begin{equation*}
\tilde{V}_t \geq E \left[ \int_t^{\tau} H(u, \tilde{Z}_u) du  + \pi_{\tau}G_{\tau}1_{\{\tau < T\}} + \pi_T G_T1_{\{\tau=T\}}| \mathcal{F}_t\right].
\end{equation*}
This is true for any $\tau \in \mathcal{S}_t^T$, in particular:
\[\tilde{V}_t \geq ess \sup_{\tau \in \mathcal{S}_t^T}   E \left[ \int_t^{\tau} H(u, \tilde{Z}_u) du  + \pi_{\tau}G_{\tau}1_{\{\tau < T\}} +\pi_T G_T1_{\{\tau=T\}}| \mathcal{F}_t\right] .\]
The reverse of the above inequality is obtained by choosing an optimal time from $\mathcal{S}_t^T$ and the condition $\int_0^T (V_t-G_t) dK_t =0$ . In fact, let
\begin{equation*}
\tau_t = \inf \{t \leq u \leq T; V_u = G_u\},
\end{equation*}
and $\tau_t = T$ if $V_u \geq G_u$. When $t\leq s< \tau_t$, $V_t > G_t$, therefore $dK_u = 0 $ for $t\leq s< \tau_t$. Taking the integral from $t$ to $\tau_t$ and using the continuity of $K$,  we have
\[\tilde{K}_{\tau_t}-\tilde{K}_t =  \int_t^{\tau_t} \pi_u dK_u = 0.\]
Therefore:
\begin{align*}
\tilde{V}_t & = E\left[ \pi_{\tau_t} G_{\tau_t}+ \int_t^{\tau_t} H(u, \tilde{Z}_u) du + \tilde{K}_{\tau_t} - \tilde{K}_{t}| \mathcal{F}_t \right] \\
& = E\left[ \pi_{\tau_t}G_{\tau_t} + \int_t^{\tau_t} H(u, \tilde{Z}_u) du | \mathcal{F}_t \right] \\
& \leq ess \sup_{\tau \in \mathcal{S}_t^T} E\left[  \int_t^{\tau} H(u, \tilde{Z}_u) du  + \pi_{\tau}G_{\tau}1_{\{\tau < T\}} +\pi_T G_T1_{\{\tau=T\}}| \mathcal{F}_t \right].
\end{align*}
The price $\tilde{V}_t = \pi_t V_t$ is the super-replication of the discounted payoff $\pi_tG_t$ of the American option.
\end{proof}

\section{Conclusion}
\indent American options have been discussed in a market model where uncertainty is described by a Markov chain. RBSDEs are introduced in this framework and the existence and uniqueness of their solutions established. A constrained super-hedging strategy for an American option is shown to exist as the unique solution of an RBSDE.

\end{document}